\documentclass[a4paper, 11pt, reqno]{amsart}
\synctex=1
\usepackage[top = 1in, bottom = 0.9in, left = 1in, right = 1in]{geometry}

\usepackage{amssymb,amsthm,amsmath,graphicx,xcolor,mathtools,enumerate,mathrsfs}
\usepackage{tabularx}
\usepackage[shortlabels]{enumitem} 
\usepackage[british]{babel}
\usepackage[utf8]{inputenc}

\allowdisplaybreaks



\usepackage[mathlines]{lineno}
\usepackage{etoolbox} 

\newcommand*\linenomathpatch[1]{%
	\expandafter\pretocmd\csname #1\endcsname {\linenomath}{}{}%
	\expandafter\pretocmd\csname #1*\endcsname{\linenomath}{}{}%
	\expandafter\apptocmd\csname end#1\endcsname {\endlinenomath}{}{}%
	\expandafter\apptocmd\csname end#1*\endcsname{\endlinenomath}{}{}%
}
\newcommand*\linenomathpatchAMS[1]{%
	\expandafter\pretocmd\csname #1\endcsname {\linenomathAMS}{}{}%
	\expandafter\pretocmd\csname #1*\endcsname{\linenomathAMS}{}{}%
	\expandafter\apptocmd\csname end#1\endcsname {\endlinenomath}{}{}%
	\expandafter\apptocmd\csname end#1*\endcsname{\endlinenomath}{}{}%
}

\expandafter\ifx\linenomath\linenomathWithnumbers
\let\linenomathAMS\linenomathWithnumbers
\patchcmd\linenomathAMS{\advance\postdisplaypenalty\linenopenalty}{}{}{}
\else
\let\linenomathAMS\linenomathNonumbers
\fi

\linenomathpatchAMS{gather}
\linenomathpatchAMS{multline}
\linenomathpatchAMS{align}
\linenomathpatchAMS{alignat}
\linenomathpatchAMS{flalign}
\linenomathpatch{equation}

\nolinenumbers


\usepackage{hyperref}
\hypersetup{colorlinks, linkcolor={red!50!black}, citecolor={green!50!black}, urlcolor={blue!50!black}}

\usepackage{caption}
\captionsetup{font=footnotesize}
\usepackage{subcaption}

\usepackage{cleveref}
\theoremstyle{plain}

\newtheorem{theorem}{Theorem}[section]
\crefname{theorem}{Theorem}{Theorems}

\crefname{proposition}{Proposition}{Propositions}

\newtheorem{corollary}[theorem]{Corollary}
\crefname{corollary}{Corollary}{Corollaries}

\newtheorem{lemma}[theorem]{Lemma}
\crefname{lemma}{Lemma}{Lemmas}

\newtheorem{conjecture}[theorem]{Conjecture}
\crefname{conjecture}{Conjecture}{Conjectures}

\crefname{problem}{Problem}{Problem}

\newtheorem{claim}[theorem]{Claim}
\crefname{claim}{Claim}{Claims}

\crefname{observation}{Observation}{Observations}

\crefname{setup}{Setup}{Setups}

\crefname{fact}{Fact}{Facts}

\crefname{algorithm}{Algorithm}{Algorithms}

\crefname{remark}{Remark}{Remarks}

\crefname{example}{Example}{Examples}

\theoremstyle{definition}

\crefname{definition}{Definition}{Definitions}

\crefname{construction}{Construction}{Constructions}

\crefname{question}{Question}{Questions}

\crefname{section}{Section}{Sections}
\crefname{figure}{Figure}{Figures}
\numberwithin{equation}{section}
\crefname{enumi}{}{}


\definecolor{VividOrange}{HTML}{F15918}
\definecolor{NeonBlue}{HTML}{1F51FF}

\DeclarePairedDelimiter\abs{\lvert}{\rvert}
\DeclarePairedDelimiter\Abs{\lVert}{\rVert}
\renewcommand{\epsilon}{\varepsilon}
\renewcommand{\subset}{\subseteq}

\def\leq{\leqslant}

\def\geq{\geqslant}
\let\isom\cong

\newcommand{\II}{\mathcal{I}}

\newcommand{\im}{\operatorname{im}}
\newcommand{\cls}{\operatorname{cls}}
\newcommand{\binc}[2]{\begin{pmatrix}#1 \\ #2\end{pmatrix}}
\newcommand{\NN}{\mathbb{N}}
\newcommand{\ZZ}{\mathbb{Z}}
\newcommand{\RR}{\mathbb{R}}
\newcommand{\PP}{\mathbb{P}}
\newcommand{\EE}{\mathcal{E}}
\newcommand{\PM}{\mathcal{P}}
\newcommand{\CM}{\mathcal{C}}

\newenvironment{proofclaim}[1][Proof of the claim]{\begin{proof}[#1]}{\end{proof}}

\usepackage{tikz}

\title[]{Independent sets\\in discrete tori of odd sidelength}
\date{\today}
\author[Arras and Joos]{Patrick Arras and Felix Joos}
\thanks{The research leading to these results was partially supported by the Deutsche \mbox{Forschungsgemeinschaft} (DFG, German Research Foundation) -- 428212407.}
\address{Universität Heidelberg,
	Institut für Informatik,
	Im Neuenheimer Feld 205,
	69120 Heidelberg, Germany}
\email{\{arras,joos\}@informatik.uni-heidelberg.de}

\begin{document}
\begin{abstract}
	It is a well known result due to Korshunov and Sapozhenko that the hypercube in $n$ dimensions has $(1 + o(1)) \cdot 2 \sqrt e \cdot 2^{2^{n-1}}$ independent sets.
	Jenssen and Keevash investigated in depth Cartesian powers of cycles of fixed even lengths far beyond counting independent sets.
	They wonder to which extent their results extend to cycles of odd length,
	where not even the easiest case, counting independent sets in Cartesian powers of the triangle, is known.
	In this paper, we make progress on their question by providing a lower bound, which we believe to be tight.
	We also obtain a less precise lower bound for the number of independent sets in Cartesian powers of arbitrary odd cycles and show how to approach this question both with the cluster expansion method as well as more directly with isoperimetric inequalities.
\end{abstract}

\maketitle
\thispagestyle{empty}
\vspace{-0.4cm}

\section{Introduction}

The hypercube $\ZZ_2^n$ is arguably among the most well-investigated graphs because it is one of the very few explicitly constructable graphs that are (very) sparse.
In this paper, we focus on counting independent sets.
Korshunov and Sapozhenko~\cite{KS83} showed that there are $(1+o(1)) \cdot 2 \sqrt e \cdot 2^{2^{n-1}}$ independent sets in the hypercube.
Observe that hypercubes are bipartite graphs where each partition class has $2^{n-1}$ vertices.
Hence there are $2\cdot 2^{2^{n-1}}-1$ independent sets that are a subset of one of the partition classes,
which already reveals the majority of all independent sets.
Roughly a $(\sqrt e -1)/\sqrt e$-fraction of all independent sets contain vertices from both partition classes.
Among those, essentially all contain only very few vertices from one class and many from the other.
This is due to the fact that selecting some vertices in one class excludes many more vertices (the neighbours of these vertices) in the other class from being in the independent set.
This structural fact plays a dominant role in essentially all considerations regarding the number of independent sets or colourings in Cartesian powers of graphs.
Estimating the number of neighbours of sets of vertices is another prominent topic on its own and is captured under the umbrella of \emph{vertex-isoperimetric inequalities}.

The problem of calculating the number of independent sets in $\ZZ_2^n$ has been revisited and extended by many researchers~\cite{BGL21,EG12,Gal11,JP20,JPP22,KP22,Par22};
in particular, there are also results regarding the number of proper $q$-colourings in $\ZZ_2^n$~\cite{Gal03,KP20}.
Most recently, Jenssen and Keevash~\cite{JK20} investigated the topic in great depth.
Instead of the hypercube only, 
they consider Cartesian powers of even cycles, where they treat the complete graph on two vertices as the cycle on two vertices.
Then their results contain the hypercube.
These graphs are usually known as $n$-dimensional discrete tori, which we denote as $\ZZ_m^n$ (if the base graph is a cycle of length $m$).
Their results include a way to calculate asymptotically sharp formulas for both the number of independent sets and the number of proper $q$-colourings in tori $\ZZ_m^n$ with $m$ even. 
To this end, they utilize the cluster expansion approach from statistical physics.
This method is well-established in the field and has been exploited to answer many similar questions, most recently in~\cite{CDF+22,JPP23}.

Jenssen and Keevash~\cite{JK20} ask whether their results extend to tori that stem from cycles of odd length.
They however note that not even the number of independent sets is asymptotically known for Cartesian powers of a triangle.
Here we make progress in answering their question and also point out why these cases may be much more complex than the previously investigated ones.

\begin{theorem}\label{thm: lowerboundK3}
There are at least $(1-o(1)) \cdot 3 \cdot 2^{n-1} \cdot 2^{3^{n-1}} \cdot \exp( (3/2)^{n-1} )$ independent sets in $\ZZ_3^n$.
\end{theorem}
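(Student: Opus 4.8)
The plan is to attach to each of the $3\cdot 2^{n-1}$ ``linear'' maximum independent sets a large family of independent sets concentrated on it, lower bound each family, and then argue the families overlap negligibly so that their sizes add. Throughout write $i(G)$ for the number of independent sets of $G$, and $N:=3^{n-1}$, the independence number of $\ZZ_3^n$. For $a\in\{1,2\}^n$ and $j\in\ZZ_3$ set $C_{a,j}:=\{x\in\ZZ_3^n:\sum_i a_ix_i=j\}$; as each $a_i$ is invertible modulo $3$, changing a single coordinate of $x$ changes $\sum_i a_ix_i$, so $C_{a,j}$ is an independent set of size $N$, hence maximum. The only coincidences are $C_{a,j}=C_{2a,2j}$, giving exactly $3\cdot 2^{n-1}$ such sets, which I call \emph{codes}. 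The one isoperimetric fact required is that each $v\notin C_{a,j}$ has exactly $n$ neighbours in $C_{a,j}$ --- one per coordinate, since $v+ce_i\in C_{a,j}$ forces $c=a_i^{-1}\bigl(j-\sum_k a_kv_k\bigr)\in\{1,2\}$.

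Fix a code $C$ and let $V=V(\ZZ_3^n)$. Each independent set $I$ splits uniquely as $I=(I\cap C)\sqcup S$ with $S=I\setminus C$, and $I$ is independent iff $S$ is independent in $\ZZ_3^n[V\setminus C]$ and $I\cap C\subseteq C\setminus N(S)$; summing over the choices of $I\cap C$ gives the exact identity $i(\ZZ_3^n)=\sum_S 2^{N-|N(S)\cap C|}$. Call $S$ \emph{spread} if $|S|\le K$ and its vertices are pairwise non-adjacent with pairwise disjoint neighbourhoods in $C$, where $K$ is any function with $(3/2)^{n-1}\ll K\ll 3^{(n-1)/2}/n$ (for instance $K=\lceil n(3/2)^{n-1}\rceil$); for spread $S$, $|N(S)\cap C|=n|S|$. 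Since $|V\setminus C|=2N$ and for each $v\in V\setminus C$ there are at most $O(n^2)$ vertices $w\in V\setminus C$ adjacent to $v$ or sharing a $C$-neighbour with $v$, a uniform $k$-subset of $V\setminus C$ is spread with probability $1-O(n^2K^2/N)=1-o(1)$, uniformly for $k\le K$. With $\lambda:=2N/2^n=(3/2)^{n-1}$ and $\binom{2N}{k}\ge\bigl(1-O(K^2/N)\bigr)(2N)^k/k!$, this yields for $\mathcal I_C:=\{I : I\setminus C\text{ spread}\}$
\[
 |\mathcal I_C|=\sum_{k=0}^{K}\#\{\text{spread }k\text{-sets}\}\cdot 2^{N-kn}\ \ge\ (1-o(1))\,2^{N}\sum_{k=0}^{K}\frac{\lambda^{k}}{k!}\ \ge\ (1-o(1))\,2^{3^{n-1}}\exp\!\bigl((3/2)^{n-1}\bigr),
\]
the final step being the Poisson tail estimate $\sum_{k\le K}\lambda^k/k!=(1-o(1))e^{\lambda}$, which holds because $K/\lambda\to\infty$ while $\lambda\to\infty$.

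It remains to sum over codes. For distinct codes $C\ne C'$ one has $|C\cap C'|\le 3^{n-2}=N/3$ (they are either disjoint or defined by a rank-$2$ linear system). Any $I\in\mathcal I_C\cap\mathcal I_{C'}$ is determined by the pair $\bigl(I\cap C\cap C',\ I\setminus(C\cap C')\bigr)$, whose second coordinate has size at most $|I\setminus C|+|I\setminus C'|\le 2K$, so
\[
 |\mathcal I_C\cap\mathcal I_{C'}|\ \le\ 2^{N/3}\sum_{j=0}^{2K}\binom{3^{n}}{j}\ \le\ 2^{N/3}\,3^{3nK}\ =\ o\!\bigl(2^{N}/4^{\,n}\bigr),
\]
using $nK=o(3^{n-1})$. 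As there are fewer than $4^{n+1}$ pairs of distinct codes, Bonferroni's inequality gives
\[
 i(\ZZ_3^n)\ \ge\ \Bigl|\bigcup_C \mathcal I_C\Bigr|\ \ge\ \sum_C|\mathcal I_C|-\sum_{C<C'}|\mathcal I_C\cap\mathcal I_{C'}|\ \ge\ (1-o(1))\cdot 3\cdot 2^{n-1}\cdot 2^{3^{n-1}}\cdot\exp\!\bigl((3/2)^{n-1}\bigr).
\]
The one genuinely delicate point is the choice of $K$: it must be large enough that $\sum_{k\le K}\lambda^k/k!$ already captures essentially all of $e^{\lambda}$ (forcing $K\gg(3/2)^{n-1}$) yet small enough that the spreading union bound survives (forcing $K\ll 3^{(n-1)/2}/n$). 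Such a window exists precisely because $(3/2)^{n-1}=o(3^{(n-1)/2})$, i.e.\ the second-order factor $\exp((3/2)^{n-1})$ lies strictly below the ``fluctuation scale'' of the main term $2^{3^{n-1}}$; verifying that this window is nonempty and that all the $o(1)$'s above are uniform in $k$ is where the real work lies, the remainder being bookkeeping. Equivalently, one may phrase $\mathcal I_C$ through a polymer model on $V\setminus C$ and read off the same bound from the first terms of its cluster expansion.
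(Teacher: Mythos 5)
Your argument is correct and follows the same overall strategy as the paper: enumerate the $3\cdot 2^{n-1}$ maximum independent sets, lower bound the independent sets ``concentrated'' on each one by letting the few defect vertices be chosen almost freely (with the Poisson factor $\exp((3/2)^{n-1})$ coming from the $\lambda^k/k!$ sum), and then use the bound $\abs{C\cap C'}\le 3^{n-2}$ on intersections of distinct maximum independent sets to control over-counting. Where you diverge is in the over-counting step. The paper constrains the majority-side part $B=I\cap C$ to lie in a window around $\abs C/2$ so that any independent set produced ``remembers'' which $C$ it came from (a produced set has intersection larger than $3^{n-2}$ with its $C$, so it cannot equal one produced from $C'\ne C$), making the families disjoint outright. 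You instead allow arbitrary $B\subset C\setminus N(S)$ and show the pairwise overlaps $\abs{\mathcal I_C\cap\mathcal I_{C'}}\le 2^{N/3}\cdot 3^{3nK}$ are negligible and finish with Bonferroni. Your route is a little cleaner in that it drops the binomial/Chebyshev bookkeeping for $\abs B$ and needs only a crude overlap bound; the paper's route is a little tighter (disjointness rather than an additive correction) and its $K_n=\lambda+n\sqrt\lambda$ window makes the $o(1)$ terms easier to make uniform. Both hinge on exactly the same two structural lemmas (the enumeration of maximum independent sets and the $3^{n-2}$ intersection bound), and both use the same Poisson-like tail.

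One small remark: you only need the $3\cdot2^{n-1}$ codes to exist, not that they exhaust $\II^*(\ZZ_3^n)$, so you can indeed skip the paper's surjectivity argument in Lemma~\ref{lem: no further maxindepsets}; this is a genuine (if minor) simplification for a pure lower bound. Also, your spread condition (pairwise non-adjacent with disjoint $C$-neighbourhoods) is stronger than needed --- any independent $S$ with $|S|=k$ contributes at least $2^{N-kn}$ because $|N(S)\cap C|\le nk$ always --- but enforcing it costs only the same $O(n^2K^2/N)=o(1)$ factor you already pay for $\binom{2N}{k}\ge(1-o(1))(2N)^k/k!$, so it is harmless.
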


The bound in \cref{thm: lowerboundK3} deserves some explanation.
There are $3 \cdot 2^{n-1}$ maximum independent sets (this is nontrivial; see \cref{lem: no further maxindepsets}) of order $3^{n-1}$.
Hence there are about $3 \cdot 2^{n-1} \cdot 2^{3^{n-1}}$ subsets of these sets (again, this is nontrivial, because these sets overlap; see \cref{lem: intersection maxindepsets}).
For the hypercube, the subsets of the two maximum independents sets give up to a factor of $\sqrt{e}$ the correct count.
For $\ZZ_3^n$, there is a correction term of (at least) $\exp( (3/2)^{n-1})$;
that is, for $\ZZ_3^n$ only a double exponentially small fraction of the independent sets are a subset of some maximum independent set.
We conjecture that the bound in \cref{thm: lowerboundK3} is asymptotically tight.

The method we use for the proof of \cref{thm: lowerboundK3} does not work for odd $m \geq 5$. Nonetheless, we are able to determine the number of maximum independent sets in $\ZZ_m^n$ for all odd $m$. Additionally, we provide an alternative approach that yields the following lower bound.
\begin{theorem}\label{thm: lowerboundKm}
There are at least $2^{\lfloor m/2 \rfloor m^{n-1}} \cdot \exp( (1-o(1)) (m/2)^{n-1} )$ independent sets in $\ZZ_m^n$ for $m \geq 3$ odd.
\end{theorem}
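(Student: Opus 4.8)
\emph{Proof idea.}
The plan is to produce one concrete independent set $I$ of $\ZZ_m^n$ with $\abs{I}=\lfloor m/2\rfloor m^{n-1}$, keep all $2^{\abs{I}}$ of its subsets, and show that a further factor $\exp((1-o(1))(m/2)^{n-1})$ arises from small defects supported on a well-chosen set of vertices outside $I$. For the construction, fix the maximum independent set $J=\{0,2,4,\dots,m-3\}$ of $C_m$; an easy computation gives $J-J=\ZZ_m\setminus\{\pm1\}$, so $J+a$ and $J+b$ are disjoint precisely when $a$ and $b$ are adjacent in $C_m$. Since $x\mapsto x_2+\dots+x_n$ is a graph homomorphism $\ZZ_m^{n-1}\to C_m$, the set
\[
 I:=\bigl\{\,x\in\ZZ_m^n:\ x_1-x_2-\dots-x_n\in J\,\bigr\}
\]
is independent: two adjacent vertices that differ only in coordinate $1$ would place two $C_m$-adjacent elements into a single translate of $J$, while two adjacent vertices that differ only in some coordinate $j\ge2$ would place a common element into two disjoint translates $J+a,J+b$ with $a,b$ adjacent in $C_m$ — both impossible. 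Clearly $\abs{I}=\abs{J}\cdot m^{n-1}=\lfloor m/2\rfloor m^{n-1}$ (in fact $I$ is a maximum independent set, though the proof does not need this).

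The second step is to single out the ``efficient'' outside vertices. Writing $g(x)=x_1-x_2-\dots-x_n$, set $B=\{x:g(x)\in\{-1,-2\}\}$, so that $\abs{B}=2m^{n-1}$ and $B\cap I=\varnothing$. Flipping each coordinate of a vertex $v\in B$ by $\pm1$ and testing whether $g$ lands in $J$ yields two facts: (i) $\abs{N(v)\cap I}=n$ for every $v\in B$ — the smallest value possible, which is what makes these vertices efficient; and (ii) the induced subgraph $H:=\ZZ_m^n[B]$ is $n$-regular and bipartite, with parts $X=\{g=-1\}$ and $Y=\{g=-2\}$ of size $N:=m^{n-1}$ (an edge inside $B$ changes $g$ by $\pm1$, hence joins $X$ to $Y$). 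Since for every independent $S\subseteq B$ and every $T\subseteq I\setminus N(S)$ the union $S\cup T$ is independent, distinct pairs $(S,T)$ give distinct sets, and the number of admissible $T$ is $2^{\abs{I}-\abs{N(S)\cap I}}\ge2^{\abs{I}-n\abs{S}}$, summing over $S$ gives
\[
 \#\{\text{independent sets of }\ZZ_m^n\}\ \ge\ 2^{\abs{I}}\sum_{S}2^{-n\abs{S}}\ =\ 2^{\lfloor m/2\rfloor m^{n-1}}\,Z_H(2^{-n}),
\]
where $S$ ranges over the independent subsets of $B$ and $Z_H(\lambda)=\sum_{S}\lambda^{\abs{S}}$, over independent $S$ in $H$, is the independence polynomial of $H$.

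It remains to bound $Z_H(2^{-n})$ from below, and for this the bipartiteness of $H$ is decisive. Splitting an independent set of $H$ according to its intersections with $X$ and $Y$, and using $\abs{N_H(A)}\le n\abs{A}$ (as $\Delta(H)=n$),
\begin{align*}
 Z_H(\lambda)=\sum_{A\subseteq X}\lambda^{\abs{A}}(1+\lambda)^{N-\abs{N_H(A)}}
 &\ge(1+\lambda)^N\sum_{A\subseteq X}\Bigl(\tfrac{\lambda}{(1+\lambda)^{n}}\Bigr)^{\abs{A}}\\
 &=\Bigl[(1+\lambda)+\tfrac{\lambda}{(1+\lambda)^{n-1}}\Bigr]^{N}.
\end{align*}
At $\lambda=2^{-n}$ the bracket equals $1+(2-o(1))2^{-n}$, so
\[
 Z_H(2^{-n})\ \ge\ \exp\bigl((1-o(1))\,N\cdot 2^{-n+1}\bigr)\ =\ \exp\bigl((1-o(1))(m/2)^{n-1}\bigr)
\]
because $N\cdot2^{-n+1}=(m/2)^{n-1}$. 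Combined with the previous display, this proves the theorem.

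The genuine obstacle, I expect, is the last step rather than the (routine if fiddly) verification of (i) and (ii). Keeping only subsets of $I$ together with an \emph{independent} defect set inside $B$ would give merely $(1+2^{-n})^{\alpha(H)}=(1+2^{-n})^{m^{n-1}}=\exp\bigl((\tfrac12+o(1))(m/2)^{n-1}\bigr)$, which loses a factor $2$ in the exponent; that factor is recovered exactly by exploiting that $H$ is bipartite with both sides of size $m^{n-1}$, so that its hard-core partition function at the very small fugacity $2^{-n}$ behaves like $(1+2\cdot2^{-n})^{m^{n-1}}$ rather than $(1+2^{-n})^{m^{n-1}}$.
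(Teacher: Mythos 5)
Your proof is correct, and it follows the same high-level strategy as the paper: pick a maximum independent set $I$ (your explicit $g^{-1}(J)$ with $g(x)=x_1-\sum_{j\ge2}x_j$ is one of the $m\cdot2^{n-1}$ sets produced by $\iota_n$), place ``defect'' vertices $A$ inside the two adjacent classes $\overline I$ closest to $I$, count subsets of $I\setminus N(A)$, and reduce the problem to bounding $\sum_{A\in\II(H)}(2^{-n})^{\abs A}$ from below, where $H$ is the induced graph on those two classes. The difference is in how this last partition-function estimate is obtained.

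The paper proves a general lemma (\cref{lem: estimate partition function}) valid for \emph{any} graph $H$, using the random-subset/FKG argument: with $p=\lambda/(1+\lambda)$, it shows $\sum_{A\in\II(H)}\lambda^{\abs A}\ge\exp(p\abs{V(H)}-2p^2\abs{E(H)})$, which plugged in with $\abs{V(H)}=2m^{n-1}$, $\abs{E(H)}=nm^{n-1}$, $\lambda=2^{-n}$ gives the result. You instead exploit the specific structure of $H$ — that it is bipartite with both parts $X,Y$ of size $N=m^{n-1}$ and $n$-regular — to write $Z_H(\lambda)=\sum_{A\subseteq X}\lambda^{\abs A}(1+\lambda)^{N-\abs{N_H(A)}}\ge\bigl[(1+\lambda)+\lambda(1+\lambda)^{1-n}\bigr]^N$ directly, with no appeal to correlation inequalities. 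Both routes yield $Z_H(2^{-n})\ge\exp((1-o(1))(m/2)^{n-1})$. Your version is more elementary and self-contained for this particular $H$; the paper's version is a reusable tool that would also apply if the defect graph were not bipartite. Incidentally, the independence of $g^{-1}(J)$ can be stated more simply than via translates: along any edge of $\ZZ_m^n$, $g$ changes by exactly $\pm1$, while elements of $J$ are pairwise at distance $\ge2$ in $\ZZ_m$ — the translate bookkeeping is not needed.
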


In order to arrive at a sensible conjecture of how higher order terms in this asymptotic behaviour might look like, we also adapt the cluster expansion approach to our setting and calculate some initial terms.

As indicated already before, 
a key tool in determining the number of independent sets in discrete tori are appropriate isoperimetric inequalities for independent sets.
We prove the following inequality, which appears to us to be of general interest.
For simplicity, consider the tori $\ZZ_3^n$.
As it turns out,
these graphs are $3$-colourable and each colour class of each $3$-colouring is of size $3^{n-1}$.
Denote the three colour classes (of some 3-colouring) by $\EE(0), \EE(1), \EE(2)$.
We consider $A\subseteq \EE(0)$ and give a lower bound for the number of neighbours of $A$ in $\EE(1)$ (or $\EE(2)$).
Similar considerations can also be made for $\ZZ_m^n$ with $m \geq 5$ odd.

To this end, define $\EE_m^n(p)$ as the set of all vertices $v = (v_1, \ldots, v_n)$ in $\ZZ_m^n$ that satisfy $\sum_{i = 1}^n v_i \equiv p$ mod $m$. Our isoperimetric inequalities then take the following form.

\begin{theorem}\label{thm: isoperimetry}
	For $m, n \in \NN$ with $m \geq 3$ odd and $\ell \coloneqq \lfloor m/2 \rfloor$, consider $p \in \ZZ_m$ and $A \subset \EE_m^n(p)$. Setting $\alpha \coloneqq \abs A / m^{n-1}$, we then have 
	\[
	\abs{N_n(A, \EE_m^n(p \pm 1))} 
	\geq \abs A \left( 1 + \frac {1-\alpha}{\sqrt{\ell^3mn}} \right)
	\,.
	\]
\end{theorem}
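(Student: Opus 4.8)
My plan is to argue by induction on $n$ with $m$ (hence $\ell$) fixed, the base case $n=1$ being trivial since there $\abs{A}\in\{0,1\}$ and $\alpha\in\{0,1\}$. Abbreviate $D_k\coloneqq\sqrt{\ell^3 m k}$, so the claim reads $\abs{N_n(A,\EE_m^n(p\pm 1))}\ge \abs{A}\bigl(1+(1-\alpha)/D_n\bigr)$ and the relevant numerology is $D_n^2-D_{n-1}^2=\ell^3 m$. Since the reflection $v\mapsto -v$ of $\ZZ_m^n$ fixes $\EE_m^n(p)$ setwise and swaps $\EE_m^n(p+1)$ with $\EE_m^n(p-1)$, it suffices to handle the $+1$ case.

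I would reduce the dimension by slicing along the last coordinate. Writing $A=\bigsqcup_{t\in\ZZ_m}\bigl(A_t\times\{t\}\bigr)$ with $A_t\subset\EE_m^{n-1}(p-t)$, a short check shows that $N_n(A,\EE_m^n(p+1))$ is, slice by slice, $\bigsqcup_{s\in\ZZ_m}\bigl(C_s\times\{s\}\bigr)$ with
\[
C_s \;=\; N_{n-1}\bigl(A_s,\EE_m^{n-1}(p+1-s)\bigr)\;\cup\; A_{s-1}\,,
\]
the two summands corresponding to the two ways to spend one up-step (inside a slice, or from slice $s-1$ to slice $s$), and both sitting inside the same set $\EE_m^{n-1}(p+1-s)$ of size $m^{n-2}$, so that no identifications are needed. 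Setting $a_s\coloneqq\abs{A_s}$ and $\alpha_s\coloneqq a_s/m^{n-2}$, the inductive hypothesis gives $\abs{C_s}\ge\max\bigl(a_s(1+(1-\alpha_s)/D_{n-1}),\,a_{s-1}\bigr)$, so everything comes down to the one‑dimensional statement that, for arbitrary reals $a_s\in[0,m^{n-2}]$,
\[
\sum_{s\in\ZZ_m}\max\Bigl(a_s\Bigl(1+\tfrac{1-\alpha_s}{D_{n-1}}\Bigr),\,a_{s-1}\Bigr)\;\ge\;\Bigl(\sum_s a_s\Bigr)\Bigl(1+\tfrac{1-\alpha}{D_n}\Bigr),\qquad \alpha=\tfrac1{m^{n-1}}\sum_s a_s\,.
\]

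The one‑dimensional inequality is proven by playing two bounds against each other. Dropping $A_{s-1}$ and using $\sum_s a_s^2\ge(\sum_s a_s)^2/m$ yields a lower bound of the form $\abs{A}+\bigl(\abs{A}(1-\alpha)-V\bigr)/D_{n-1}$ with $V\coloneqq m^{-(n-2)}\sum_s\bigl(a_s-\abs{A}/m\bigr)^2$; because $D_{n-1}/D_n=\sqrt{(n-1)/n}$ differs from $1$ only by $\Theta(1/n)$, this already beats the target as soon as the profile $(a_s)$ is close to constant, i.e. when $V$ is smaller than about $\abs{A}(1-\alpha)/n$. When $(a_s)$ varies more I would instead use $\abs{C_s}\ge\max(a_s,a_{s-1})$ together with the identity $\sum_s\max(a_s,a_{s-1})=\abs{A}+\tfrac12\sum_s\abs{a_s-a_{s-1}}$ and a Poincaré/isoperimetric inequality on the cycle $C_m$, whose spectral gap is $2(1-\cos(2\pi/m))=\Theta(\ell^{-2})$ and whose diameter is $\ell$: this converts the total variation $\sum_s\abs{a_s-a_{s-1}}$ into something comparable to $V$, and it is here that the powers of $\ell$ and the extra factor $m$ (from summing over the $m$ slices) enter, matching the per‑step budget $\ell^3 m$.

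The step I expect to be the main obstacle is gluing these two regimes. One cannot simply average the two lower bounds: the first is essentially tight when all slices sit at density $1/2$, so nothing can be wasted there; and a short computation shows that the $\max$‑type bound $\abs{A}+\max\bigl((\abs{A}(1-\alpha)-V)/D_{n-1},\ \tfrac12\sum_s\abs{a_s-a_{s-1}}\bigr)$, which is all that the naive treatment of $N_{n-1}(A_s,\cdot)\cup A_{s-1}$ delivers, is genuinely too weak once $n$ is large — it would force an $n$‑independent inequality between total variation and variance that does not hold. So the in‑slice expansion gain and the between‑slice union gain must be combined \emph{additively}, which I expect to require carrying a strengthened statement through the induction, e.g. a bound on $\bigl\lvert N_{n-1}(A_s,\cdot)\setminus B\bigr\rvert$ valid for every set $B$, so that the contribution of $A_{s-1}$ can be added on top of the expansion slice by slice. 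Finally, I note that a purely Fourier‑analytic argument is genuinely insufficient for \cref{thm: isoperimetry}: the second singular value of the bipartite graph between $\EE_m^n(p)$ and $\EE_m^n(p+1)$ only produces a denominator of order $\ell^2 n$ rather than $\ell^2\sqrt n$, so the combinatorial input above is essential.
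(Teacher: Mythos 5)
Your setup — slice along the last coordinate, observe that each slice of the neighbourhood contains both $N_{n-1}(A_s,\cdot)$ and $A_{s-1}$, and reduce to a one-dimensional statement about the profile $(a_s)_{s\in\ZZ_m}$ — is exactly what the paper does (via \cref{lem: slice-est}), and the $n=1$ base case and the $\pm 1$ symmetry reduction are also the same. But the heart of the argument, the ``gluing'' you identify as the main obstacle, is handled by the paper in a way you rule out prematurely, and your proposal leaves that step genuinely open.

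Concretely, you claim the per-slice max
\[
\abs{N_n(A,\EE_m^n(p+1))}\ \ge\ \sum_{s}\max\Bigl\{\abs{N_{n-1}(A_s,\cdot)},\ a_{s-1}\Bigr\}
\]
is ``genuinely too weak once $n$ is large,'' so that the in-slice and cross-slice gains ``must be combined additively'' and the inductive statement strengthened. This is not so. The paper sets a threshold $\delta \coloneqq m\alpha(1-\alpha)/\sqrt{\ell^3mn}$ and splits into exactly two cases that never need to be averaged. If some slice has $\alpha_{\tilde q-1}\ge \alpha_{\tilde q}+\delta$, then the cross-slice option on that \emph{single} slice, together with the trivial $\abs{N_{n-1}(A_q,\cdot)}\ge a_q$ everywhere else, already delivers the required surplus $\delta\, m^{n-2}=\abs{A}(1-\alpha)/\sqrt{\ell^3mn}$; no inductive expansion is used at all. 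If no slice has such a jump, a circular averaging lemma forces every $\alpha_q$ into $[\alpha-\ell\delta,\,\alpha+\ell\delta]$, and then applying the inductive bound on every slice and minimizing $\sum_q \alpha_q(1-\alpha_q)$ over that window by concavity loses at most $2\ell^3\delta^2$. The crucial numerology you miss is that this loss is \emph{not} fatal: the inductive bound carries a denominator $\sqrt{\ell^3m(n-1)}$ rather than $\sqrt{\ell^3mn}$, and the ratio $\sqrt{(n-1)/n}$ supplies exactly the slack $1-O(1/n)$ needed to absorb $2\ell^3\delta^2 = 2m\,\alpha^2(1-\alpha)^2/n$. In particular, your assertion that the inductive bound is ``essentially tight when all slices sit at density $1/2$'' is the wrong intuition: in that flat case the inductive bound gives $\abs A(1+\tfrac{1/2}{\sqrt{\ell^3m(n-1)}})$, which strictly \emph{exceeds} the target $\abs A(1+\tfrac{1/2}{\sqrt{\ell^3mn}})$, and this excess is precisely what is spent on the concavity loss when the profile is nearly but not exactly flat. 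So the dichotomy closes the argument without any strengthening of the induction hypothesis, without a Poincaré inequality on the cycle, and without a set-deleted version $\abs{N_{n-1}(A_s,\cdot)\setminus B}$; that speculative machinery in your proposal is both unnecessary and undeveloped, and as written your proof does not go through.
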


We remark that our inequalities are of the same type as needed for bipartite tori.
Unfortunately, for $\ZZ_3^n$ it seems crucial to prove an appropriate lower bound for the number of neighbours in $\EE(2)$ of an independent set $A\subseteq \EE(0) \cup \EE(1)$. Here, it is important that $A$ is independent in $\ZZ_3^n$, and incorporating this condition appears to us as a significant complication.
Finding such an inequality would be very desirable.

\medskip

The paper is structured as follows.
We first fix some further notation in \cref{sec:nota}.
In \cref{sec:lower-bounds} we prove \cref{thm: lowerboundK3,thm: lowerboundKm}. The more general approach via cluster expansion is introduced in \cref{sec:clusterexpansion}, and \cref{sec:isoperimetry} deals with the proof of \cref{thm: isoperimetry}.

\section{Preliminaries}\label{sec:nota}
Let $\II(G)$ denote the set of all \emph{independent} sets in the graph $G$, that is all $I \subset V(G)$ such that $E(G[I]) = \emptyset$. For $k \in \NN_0$, we also define $\II_k(G) \coloneqq \{ I \in \II(G) \colon \abs I = k \}$. Moreover, let $\II^*(G) \coloneqq \{ I \in \II(G) \colon \abs I \geq \abs {I'} \text{ for all } I' \in \II(G) \}$ be the set of all \emph{maximum} independent sets in $G$. For a vertex set $X \subset V(G)$, we write $N_G(X) \coloneqq \bigcup_{x \in X} N_G(x) \setminus X$ for the \emph{neighbourhood} of $X$. If $Y \subset V(G)$ is another set, then we denote the neighbourhood of $X$ in $Y$ as $N_G(X, Y) \coloneqq N_G(X) \cap Y$.

Let $[k] \coloneqq \{ 1, \ldots, k \}$. We are interested in $n$-dimensional discrete tori $\ZZ_m^n$ with odd sidelength $m$. These graphs can be defined by $V(\ZZ_m^n) \coloneqq \{ 0, 1, \ldots, m-1 \}^n$ and
\begin{align*}
	E(\ZZ_m^n) \coloneqq
	\{ uv \mid \text{there is } j \in [n] \text{ such that } u_j &= v_j \pm 1 \text{ mod } m \\
	\text{ and } u_i &= v_i \text{ for all } i \in [n] \setminus \{ j \} \}
	\,.
\end{align*}

All our arguments will eventually examine the limit as $n \to \infty$. 
For the sake of brevity, we utilize the Landau notation $o(h(n))$ to denote any function $j(n)$ satisfying $j(n) / h(n) \to 0$ as $n \to \infty$. 
This is mainly used in statements of the form $f(n) \geq (1 - o(1)) g(n)$ to express that $g$ is an asymptotic lower bound for $f$ or as $f(n) = (1 \pm o(1)) g(n)$ to say that both $f(n) \geq (1 - o(1)) g(n)$ and $f(n) \leq (1 + o(1)) g(n)$ hold. 
Similarly, we write $O(h(n))$ to denote any $j(n)$ with $\limsup_{n \to \infty} j(n) / h(n) < \infty$.

We will also make use of a standard probability tool that, depending on the context, is known as either the FKG inequality~\cite{FKG71} or the Harris inequality~\cite{Har60}. 
In its most basic form, it asserts the following.
\begin{lemma}\label{lem: FKG}
Let $k \in \NN$ and consider the probability space $(\{ 0, 1 \}^k, \mathcal F, \PP)$. 
Define the partial order $\leq$ on $\{ 0, 1 \}^k$ by $x' \leq x$ if $x'_i \leq x_i$ for all $i \in [k]$. 
An event $A \in \mathcal F$ is called \emph{decreasing} if $x' \leq x$ and $x \in A$ imply $x' \in A$. 
Then for any two decreasing events $A, B \in \mathcal F$, we have $\PP[A \cap B] \geq \PP[A] \PP[B]$.
\end{lemma}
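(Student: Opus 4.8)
The plan is to prove the inequality by induction on $k$, exploiting the product structure of $\PP$: as is standard for the ``basic form'' of the FKG/Harris inequality, we take $\PP = \bigotimes_{i=1}^k \mu_i$ to be a product measure (in all our applications $\PP$ is even uniform), since independence of the coordinates is exactly what makes the statement true. It is convenient to pass to indicator functions: with $f \coloneqq \mathbf 1_A$ and $g \coloneqq \mathbf 1_B$, both $f$ and $g$ are nonincreasing for the partial order $\leq$, and the claim $\PP[A \cap B] \geq \PP[A]\PP[B]$ becomes $\mathbb E[fg] \geq \mathbb E[f]\,\mathbb E[g]$.

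For the base case $k = 1$ I would use the standard two-point trick: if $X, X'$ are independent copies of a single $\{0,1\}$-valued coordinate, then $(f(X) - f(X'))(g(X) - g(X')) \geq 0$ holds pointwise, because $f$ and $g$ are monotone in the same direction. Taking expectations and expanding the product yields $2\bigl(\mathbb E[fg] - \mathbb E[f]\,\mathbb E[g]\bigr) \geq 0$, which is precisely the case $k = 1$.

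For the inductive step, write $x = (x', x_k)$ with $x' \in \{0,1\}^{k-1}$, let $\PP'$ denote the induced product measure on the first $k-1$ coordinates, and set $p \coloneqq \PP[x_k = 1]$. For $j \in \{0,1\}$ the section $A_j \coloneqq \{ x' : (x', j) \in A \}$ is again a decreasing subset of $\{0,1\}^{k-1}$, and since $A$ is decreasing one has $A_1 \subseteq A_0$; likewise for $B$. Writing $a_j \coloneqq \PP'[A_j]$ and $b_j \coloneqq \PP'[B_j]$, conditioning on $x_k$ gives $\PP[A \cap B] = (1-p)\,\PP'[A_0 \cap B_0] + p\,\PP'[A_1 \cap B_1]$, and applying the induction hypothesis to the two $(k-1)$-dimensional sections bounds this below by $(1-p)\,a_0 b_0 + p\,a_1 b_1$. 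On the other hand $\PP[A]\PP[B] = \bigl((1-p)a_0 + p a_1\bigr)\bigl((1-p)b_0 + p b_1\bigr)$, and a one-line computation shows that the difference of these two expressions equals $p(1-p)(a_0 - a_1)(b_0 - b_1)$, which is nonnegative because $a_1 \leq a_0$ and $b_1 \leq b_0$. This closes the induction.

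The argument is entirely routine, so I do not anticipate a genuine obstacle; the only points requiring a little care are verifying that the ``decreasing'' property is inherited by the coordinate sections (so that the induction hypothesis applies and the ordering $a_1 \leq a_0$, $b_1 \leq b_0$ is available) and checking the short algebraic identity for the difference. It is worth emphasising that independence of the coordinates is genuinely used in the conditioning step — the inequality fails for general measures on $\{0,1\}^k$ — and this product structure is the only substantive ingredient.
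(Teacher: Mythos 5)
Your proof is correct. The paper itself gives no proof of this lemma --- it is stated as a standard tool and attributed to \cite{FKG71} and \cite{Har60} --- so there is nothing to compare against; what you have written is precisely Harris's classical induction-on-dimension argument, and every step checks out: the coordinate sections $A_0, A_1$ of a decreasing event are decreasing with $A_1 \subseteq A_0$, the conditioning on the last coordinate is valid under the product structure, and the algebraic identity
\[
(1-p)a_0b_0 + p\,a_1b_1 - \bigl((1-p)a_0 + p a_1\bigr)\bigl((1-p)b_0 + p b_1\bigr) = p(1-p)(a_0 - a_1)(b_0 - b_1) \geq 0
\]
closes the induction. One remark worth making explicit: as stated, the lemma omits the hypothesis that $\PP$ is a product measure, without which the inequality is false in general; you correctly identify and supply this hypothesis, and it is indeed satisfied in the paper's only application (Lemma~\ref{lem: estimate partition function}), where each vertex is included in $X$ independently with probability $p$, so your reading is the intended one.
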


\section{Lower bounds}\label{sec:lower-bounds}
In this section, we prove the asymptotic lower bounds on $\abs {\II(\ZZ_m^n)}$ in \cref{thm: lowerboundK3,thm: lowerboundKm}, where $\II(G)$ refers to the set of independent sets in a graph $G$. 
We begin with the more detailed estimate for $\abs {\II(\ZZ_3^n)}$ in \cref{thm: lowerboundK3}.
The principal idea behind our argument is a modification of the simplified approach used by Sapozhenko~\cite{Sap89} for proving the lower bound of $\abs {\II(\ZZ_2^n)} \geq (1-o(1)) \cdot 2 \sqrt e \cdot 2^{2^{n-1}}$ in the hypercube, as presented by Galvin~\cite{Gal19} in his expository note. 
Recall that the hypercube possesses a unique bipartition into two maximum independent sets. 
Now choose one of them as the \emph{majority side} $I$ of the independent set to be constructed and select a relatively small independent set $A$ of $k$ \emph{defect vertices} on the \emph{minority side} $\overline I$. Then combine $A$ with a relatively large set $B \subset I \setminus N_{\ZZ_2^n}(A, I)$ to form an independent set $A \cup B \in \II(\ZZ_2^n)$. Essentially, the argument then comes down to proving that this process produces all but a negligible fraction of independent sets in the hypercube.

We would like to follow a similar strategy for $\ZZ_m^n$ with $m$ odd. We again start by selecting a maximum independent set $I \in \II^*(\ZZ_m^n)$ as the majority side. For odd~$m$, however, there is more than two choices: We observe in \cref{lem: generate maxindepsets,lem: no further maxindepsets} that all maximum independent sets in $\ZZ_m^n$ look the same, partitioning $V(\ZZ_m^n)$ into $m$ partition classes and selecting $\ell \coloneqq \lfloor m/2 \rfloor$ pairwise non-adjacent ones of these classes. This part is actually true for all odd $m$ and thus is stated in full generality.

In order to leave the maximum number of vertices in $I \setminus N_{\ZZ_m^n}(A, I)$ as potential members of $B$, we would like to select the $k$ defect vertices of $A$ in a way that minimizes $\abs {N_{\ZZ_m^n}(A, I)}$. One quickly finds that since $I$ comprises $\ell$ of the $m$ partition classes, the minority side $\overline I$ must contain two adjacent classes. As the vertices in these classes have the least neighbours in $I$, choosing $A$ as an independent set in the graph $H_n \subset \ZZ_m^n$ induced by these two classes is optimal.

Finally, this is again combined with a set $B \subset I \setminus N_{\ZZ_m^n}(A, I)$ to form an independent set $A \cup B \in \II(\ZZ_m^n)$. When calculating the exact numbers, our lower bound suggests that for $m = 3$, the number of defects $k$ is essentially Poisson-distributed with parameter $\lambda = (m/2)^{n-1}$. Additionally, prescribing a minimum size for $B$ guarantees that starting out with different maximum independent sets $I$ leads to different independent sets $A \cup B$, so no set in $\II(\ZZ_m^n)$ is counted multiple times. We then obtain the desired asymptotic lower bound by summing over a sufficiently large range of $k$ around $\lambda$.

As our first step, \cref{lem: max size of maxindepset,lem: generate maxindepsets,lem: only disjoint if shifted,lem: no further maxindepsets} examine the size, number, and structure of maximum independent sets in $\ZZ_m^n$ for $m$ odd.

\begin{lemma} \label{lem: max size of maxindepset}
For $m, n \in \NN$ with $m \geq 2$ and $\ell \coloneqq \lfloor m/2 \rfloor$, every $I \in \II(\ZZ_m^n)$ satisfies $\abs I \leq m^{n-1} \ell$.
\end{lemma}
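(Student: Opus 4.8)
The plan is to prove this by induction on $n$, slicing the torus along the last coordinate. For the base case $n = 1$, the graph $\mathbb{Z}_m^1$ is just the cycle $C_m$ (or $K_2$ when $m = 2$), whose independence number is well known to be $\lfloor m/2 \rfloor = \ell = m^0 \ell$. For the inductive step, write $\mathbb{Z}_m^n$ as $m$ disjoint copies $L_0, \ldots, L_{m-1}$ of $\mathbb{Z}_m^{n-1}$, where $L_j$ consists of all vertices with last coordinate equal to $j$, and where $L_j$ is joined to $L_{j-1}$ and $L_{j+1}$ (indices mod $m$) by a perfect matching. Given $I \in \II(\mathbb{Z}_m^n)$, set $I_j \coloneqq I \cap L_j$; each $I_j$ is independent in $L_j \isom \mathbb{Z}_m^{n-1}$, so by induction $\abs{I_j} \leq m^{n-2}\ell$.

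The naive bound $\abs I = \sum_j \abs{I_j} \leq m \cdot m^{n-2}\ell = m^{n-1}\ell$ already gives exactly what we want, so in fact a bare induction suffices and no extra cleverness is needed. I would present it just like that: the key point is simply that the $m$ layers are vertex-disjoint copies of $\mathbb{Z}_m^{n-1}$, that restricting an independent set to a layer stays independent, and that the per-layer bound from the inductive hypothesis sums to the claimed total. One should be slightly careful about the convention $m = 2$, where $\mathbb{Z}_2^n$ is the hypercube and $\mathbb{Z}_2^1 = K_2$ with $\ell = 1$; the same decomposition into two layers works, each a copy of $\mathbb{Z}_2^{n-1}$, giving $\abs I \leq 2 \cdot 2^{n-2} = 2^{n-1} = 2^{n-1}\ell$.

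There is essentially no obstacle here; the statement is designed to be the easy warm-up before the harder structural results (\cref{lem: generate maxindepsets,lem: no further maxindepsets}) that identify \emph{which} sets achieve the bound and count them. If one wanted to avoid induction entirely, an alternative is to average over the $m^{n-1}$ "axis-aligned" copies of $C_m$ in a fixed direction: each such cycle contains at most $\ell$ vertices of $I$, every vertex of $\mathbb{Z}_m^n$ lies on exactly one of them, so $\abs I \leq m^{n-1}\ell$ directly. I would likely use this one-line double-counting argument in the paper instead of the induction, since it is cleaner.
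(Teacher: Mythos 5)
Your inductive argument is precisely the paper's proof: slice along the last coordinate into $m$ vertex-disjoint copies of $\ZZ_m^{n-1}$, restrict $I$ to each slice, apply the inductive hypothesis, and sum. The one-line double-counting alternative over axis-aligned copies of $C_m$ is also correct and arguably cleaner, but it is not the route the paper takes.
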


\begin{proof}
We proceed by induction on $n$. For $n = 1$, the statement is trivial.
Suppose that for some $n > 1$, every $I' \in \II(\ZZ_m^{n-1})$ satisfies $\abs {I'} \leq m^{n-2} \ell$, and let $I \in \II(\ZZ_m^n)$ be arbitrary. Partition $V(\ZZ_m^n)$ into $V_q \coloneqq \{ v \in V(\ZZ_m^n) \colon v_n = q \}$ for $q \in \ZZ_m$ and observe that each $\ZZ_m^n[V_q]$ is isomorphic to $\ZZ_m^{n-1}$. In particular, this partitions $I$ into $m$ subsets $I_q = I \cap V_q$, which inherit the independence of $I$ and are thus isomorphic to independent sets $I_q' \in \II(\ZZ_m^{n-1})$. By the induction hypothesis, these satisfy $\abs {I_q} = \abs {I_q'} \leq m^{n-2} \ell$ and as there are $m$ of them, we obtain $\abs I = \sum_{q \in \ZZ_m} \abs {I_q} \leq m^{n-1} \ell$.
\end{proof}

\begin{lemma} \label{lem: generate maxindepsets}
For $m, n \in \NN$ with $m \geq 3$ odd, the map $\iota_n \colon \ZZ_m \times \{ \pm 1 \}^{n-1} \to \II^*(\ZZ_m^n)$ defined by
\begin{align*}
	\iota_n(q, \epsilon_1, \ldots, {}&\epsilon_{n-1}) \\ \coloneqq \Bigg\lbrace &v \in V(\ZZ_m^n) \colon v_1 + \sum_{i = 1}^{n-1} \epsilon_i v_{i+1} \in \{ q, q+2, \ldots, q+m-3 \} \mod m \Bigg\rbrace
\end{align*}
is well-defined and injective.
\end{lemma}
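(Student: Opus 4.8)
The plan is to verify the two claims separately: first that each set $\iota_n(q, \epsilon_1, \ldots, \epsilon_{n-1})$ is indeed a maximum independent set, and then that the map is injective. For the first part, fix $\epsilon \in \{\pm 1\}^{n-1}$ and define the ``weight'' $w(v) \coloneqq v_1 + \sum_{i=1}^{n-1} \epsilon_i v_{i+1}$ considered modulo $m$. The key observation is that moving along any edge of $\ZZ_m^n$ changes exactly one coordinate $v_j$ by $\pm 1$, hence changes $w(v)$ by $\pm 1$ (note $\epsilon_i \in \{\pm 1\}$, so the coefficient of each coordinate is a unit). The set $S \coloneqq \{q, q+2, \ldots, q+m-3\} \subset \ZZ_m$ is a set of $\ell = (m-1)/2$ residues, no two of which are consecutive modulo $m$ (here it matters that $m$ is odd, so that $q$ and $q+m-1 = q-1$ are not both forced in; the gap of length $2$ wraps around cleanly). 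Therefore if $w(u), w(v) \in S$ and $uv \in E(\ZZ_m^n)$, then $w(u)$ and $w(v)$ would be consecutive residues in $S$, a contradiction; this shows $\iota_n(q,\epsilon)$ is independent. To see it has size exactly $\ell m^{n-1}$, note that the map $v \mapsto (w(v) \bmod m, v_2, \ldots, v_n)$ is a bijection $V(\ZZ_m^n) \to \ZZ_m^{n}$ (given the last $n-1$ coordinates, $v_1$ is determined by $w(v)$), so exactly an $\ell/m$ fraction of all $m^n$ vertices lie in $\iota_n(q,\epsilon)$. By \cref{lem: max size of maxindepset} this is the maximum possible, so $\iota_n(q,\epsilon) \in \II^*(\ZZ_m^n)$ and the map is well-defined.

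For injectivity, suppose $\iota_n(q, \epsilon) = \iota_n(q', \epsilon')$. The plan is to recover both $q$ and $\epsilon$ from the set. First I would recover the $\epsilon_i$: consider two vertices differing only in coordinate $i+1$, say $v$ and $v + e_{i+1}$ (coordinate arithmetic mod $m$); as $v$ ranges, whether both, one, or neither lies in the set is governed by $w$, and the ``direction'' in which membership changes as $v_{i+1}$ increases by $1$ detects the sign of $\epsilon_i$. More concretely, one can argue that $\epsilon_i = +1$ if and only if the set is invariant under the simultaneous shift $v_1 \mapsto v_1 - 1$, $v_{i+1} \mapsto v_{i+1} + 1$, and $\epsilon_i = -1$ if and only if it is invariant under $v_1 \mapsto v_1 - 1$, $v_{i+1} \mapsto v_{i+1} - 1$; since $S$ is not symmetric enough to be invariant under both (again using that $m$ is odd, $S \ne S + t$ for $t \ne 0$, and $S$ has a nontrivial automorphism pattern only in trivial cases), $\epsilon$ is determined. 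Once $\epsilon = \epsilon'$ is known, the sets $\iota_n(q,\epsilon)$ and $\iota_n(q',\epsilon)$ are the preimages under the same weight function $w$ of $S$ and $S + (q'-q)$ respectively; equality of these preimages forces $S = S + (q'-q)$, and since $\abs S = \ell < m$ and $S$ consists of an arithmetic progression with common difference $2$ in the cyclic group $\ZZ_m$ of odd order, the only translate of $S$ equal to $S$ is the trivial one, giving $q = q'$.

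The main obstacle I expect is the bookkeeping in the injectivity argument — specifically, making precise and fully rigorous the claim that the sign pattern $\epsilon$ is reconstructible from the vertex set, and that a proper arithmetic progression of length $\ell$ and step $2$ in $\ZZ_m$ (odd $m$) has trivial translation stabilizer. The latter is elementary: $S = S + t$ would make $S$ a union of cosets of $\langle t \rangle$, forcing $\abs{\langle t\rangle} \mid \ell$; combined with $\abs{\langle t \rangle} \mid m$ and $\gcd$ considerations with $m$ odd, plus the explicit ``no two consecutive'' structure of $S$, one rules out every nontrivial $t$. The former is where one must be careful to phrase the shift-invariance criterion so that it genuinely distinguishes $\epsilon_i = +1$ from $\epsilon_i = -1$ using only the set (not the function $w$, which is not a priori recoverable); working with the induced shifts on $\ZZ_m^n$ and tracking how $S$ versus its reflection behave under translation should settle this cleanly, but it requires attention to the $m$-odd hypothesis at exactly the right spot.
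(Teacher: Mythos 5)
Your proposal is essentially correct, and the well-definedness argument matches the paper's in substance (edges change the weight $w(v)$ by $\pm 1$; $S$ has no two consecutive residues; a counting/bijection argument gives $|\iota_n(q,\epsilon)| = \ell m^{n-1}$, which is maximal by the earlier lemma). Where you genuinely diverge is injectivity: the paper proceeds by exhibiting explicit \emph{witness vertices} — if $q \neq q'$, after a parity reduction (using that $m$ is odd) the vertex $(q,0,\ldots,0)$ lies in one set but not the other; if $q = q'$ but some $\epsilon_j \neq \epsilon'_j$, the vertex $(q-1,1,0,\ldots,0)$ (WLOG $j=1$) distinguishes them, since its weights under the two sign vectors are $q$ and $q-2$. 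You instead argue that $\epsilon$ and $q$ are \emph{intrinsically recoverable} from the set, via its invariance under the coordinate shifts $(v_1,v_{i+1}) \mapsto (v_1-1,v_{i+1}\pm 1)$ and the fact that the arithmetic progression $S$ has trivial translation stabilizer in $\ZZ_m$ (because $\gcd(\ell,m)=\gcd(\ell,2\ell+1)=1$). Your argument is sound: the shift changes $w_\epsilon$ by $-1+\epsilon_i$ (resp.\ $-1-\epsilon_i$), so invariance forces the residue shift $1\mp\epsilon_i$ to stabilize $S$, and since $S+2\neq S$ this pins down $\epsilon_i$ for \emph{any} $(q,\epsilon)$ producing the set, giving $\epsilon=\epsilon'$ and then $q=q'$. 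What each approach buys: the paper's is shorter, needs no structural facts about stabilizers, and produces concrete separating vertices; yours is more conceptual, makes visible \emph{why} the parameters are encoded in the set (direction of invariance $\leftrightarrow$ sign pattern, translation class $\leftrightarrow$ offset), and the stabilizer lemma you isolate is reusable. You would, as you anticipate, need to write out the invariance computation and the $\gcd(\ell, 2\ell+1)=1$ step carefully, but there is no gap in the underlying ideas.
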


\begin{proof}
We start by showing that $\iota_n$ is well-defined. For this, let $uv$ be an arbitrary edge in $\ZZ_m^n$. Note that this means that $u, v$ only differ in one coordinate, and only by 1 mod $m$. Therefore, the sums $u_1 + \sum_{i = 1}^{n-1} \epsilon_i u_{i+1}$ and $v_1 + \sum_{i = 1}^{n-1} \epsilon_i v_{i+1}$ only differ by 1 mod $m$ and as the values $q, q+2, \ldots, q+m-3$ have pairwise differences of at least 2 mod $m$, any $I \in \im(\iota_n)$ can contain at most one of $u, v$. Since $uv \in E(\ZZ_m^n)$ was arbitrary, this proves $\im(\iota_n) \subset \II(\ZZ_m^n)$.
	
To see that each set in $\im(\iota_n)$ is indeed a maximum independent set, partition $\ZZ_m^n$ into $V_w \coloneqq \{ v \in V(\ZZ_m^n) \colon (v_2, \ldots, v_n) = w \}$ for all $w \in V(\ZZ_m^{n-1})$. Observe that for any $(q, \epsilon_1, \ldots, \epsilon_{n-1}) \in \ZZ_m \times \{ \pm 1 \}^{n-1}$, all vertices $v \in V_w$ produce the same sum $\sum_{i = 1}^{n-1} \epsilon_i v_{i+1}$, so $v_1 + \sum_{i = 1}^{n-1} \epsilon_i v_{i+1}$ will cycle through all possible values $0, \ldots, m-1$ mod $m$ as $v$ cycles through $V_w$. Irrespective of $q$, exactly $\ell \coloneqq \lfloor m/2 \rfloor$ of these values belong to $\{ q, q+2, \ldots, q+m-3 \}$, which shows that $\abs {\iota_n(q, \epsilon_1, \ldots, \epsilon_{n-1}) \cap V_w} = \ell$ for all $w \in V(\ZZ_m^{n-1})$. It is now easy to see that
\[
	\abs {\iota_n(q, \epsilon_1, \ldots, \epsilon_{n-1})}
	= \sum_{w \in V(\ZZ_m^{n-1})} \abs {\iota_n(q, \epsilon_1, \ldots, \epsilon_{n-1}) \cap V_w}
	= m^{n-1} \ell
\]
and $\iota(q, \epsilon_1, \ldots, \epsilon_{n-1}) \in \II^*(\ZZ_m^n)$ follows by \cref{lem: max size of maxindepset}.
	
To see that $\iota_n$ is injective, let $(q, \epsilon_1, \ldots, \epsilon_{n-1}) \neq (q', \epsilon'_1, \ldots, \epsilon'_{n-1})$. If $q \neq q'$, then $q - q' \mod m \in \{ 1, \ldots, m-1 \}$. Without loss of generality, let $q - q' \mod m$ be odd, otherwise swap the roles of $(q, \epsilon_1, \ldots, \epsilon_{n-1})$ and $(q', \epsilon'_1, \ldots, \epsilon'_{n-1})$ and observe that $m$ being odd implies that $q' - q \equiv m - (q - q') \mod m \in \{ 1, \ldots, m-1 \}$ has a different parity than $q - q' \mod m$. Then $(q, 0, \ldots, 0) \in \iota_n(q, \epsilon_1, \ldots, \epsilon_{n-1})$, but $(q, 0, \ldots, 0) \notin \iota_n(q', \epsilon'_1, \ldots, \epsilon'_{n-1})$, and so $\iota_n(q, \epsilon_1, \ldots, \epsilon_{n-1}) \neq \iota_n(q', \epsilon'_1, \ldots, \epsilon'_{n-1})$.
	
If $q = q'$, we may assume by symmetry that $\epsilon_1 \neq \epsilon'_1$. Consider the vertex $v \coloneqq (q-1, 1, 0, \ldots, 0) \in V(\ZZ_m^n)$. Then $v_1 + \sum_{i = 1}^{n-1} \epsilon_i v_{i+1} = q-1 + \epsilon_1$, while $v_1 + \sum_{i = 1}^{n-1} \epsilon'_i v_{i+1} = q-1 + \epsilon'_1$. As $\epsilon_1 \neq \epsilon'_1 \in \{ \pm 1 \}$, one of these values is $q \in \{ q, q+2, \ldots, q+m-3 \}$ and the other is $q-2 \notin \{ q, q+2, \ldots, q+m-3 \}$. Consequently $v$ belongs to exactly one of the sets $\iota_n(q, \epsilon_1, \ldots, \epsilon_{n-1})$ and $\iota_n(q', \epsilon'_1, \ldots, \epsilon'_{n-1})$, which shows that $\iota_n(q, \epsilon_1, \ldots, \epsilon_{n-1}) \neq \iota_n(q', \epsilon'_1, \ldots, \epsilon'_{n-1})$.
\end{proof}

\begin{lemma} \label{lem: only disjoint if shifted}
For $m, n \in \NN$ with $m \geq 3$ odd, consider $x = (q, \epsilon_1, \ldots, \epsilon_{n-1}), x' = (q', \epsilon'_1, \ldots, \epsilon'_{n-1}) \in \ZZ_m \times (\pm 1)^{n-1}$. Then $\iota_n(x) \cap \iota_n(x') = \emptyset$ implies $q' \equiv q \pm 1 \mod m$ and $\epsilon_j = \epsilon_j'$ for all $j \in [n-1]$.
\end{lemma}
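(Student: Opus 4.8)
The plan is to work with the $\ZZ_m$-linear functions $f_\epsilon \colon V(\ZZ_m^n) \to \ZZ_m$ given by $f_\epsilon(v) \coloneqq v_1 + \sum_{i=1}^{n-1} \epsilon_i v_{i+1} \bmod m$, in terms of which $\iota_n(q, \epsilon_1, \ldots, \epsilon_{n-1}) = f_\epsilon^{-1}(S_q)$ for the set $S_q \coloneqq \{q, q+2, \ldots, q+m-3\} \subset \ZZ_m$ of size $\ell$. The crucial preliminary observation is a surjectivity statement: $f_\epsilon$ is surjective (it cycles through all of $\ZZ_m$ as $v_1$ alone ranges over $\ZZ_m$, exactly as in the proof of \cref{lem: generate maxindepsets}), and if $\epsilon \neq \epsilon'$, say $\epsilon_j \neq \epsilon'_j$, then even the \emph{joint} map $v \mapsto (f_\epsilon(v), f_{\epsilon'}(v))$ is surjective onto $\ZZ_m \times \ZZ_m$. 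Indeed, restricting to vertices $v$ supported on coordinates $1$ and $j+1$ turns this into the linear map $(v_1, v_{j+1}) \mapsto (v_1 + \epsilon_j v_{j+1},\, v_1 + \epsilon'_j v_{j+1})$, whose matrix has determinant $\epsilon'_j - \epsilon_j \in \{+2, -2\}$, a unit in $\ZZ_m$ since $m$ is odd; hence this linear map is already a bijection of $\ZZ_m^2$.

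With this in hand, the lemma splits into two cases. If $\epsilon_j \neq \epsilon'_j$ for some $j \in [n-1]$, then by the joint surjectivity there is a vertex $v$ with $f_\epsilon(v) = q$ and $f_{\epsilon'}(v) = q'$, so $v \in f_\epsilon^{-1}(S_q) \cap f_{\epsilon'}^{-1}(S_{q'}) = \iota_n(x) \cap \iota_n(x')$, contradicting disjointness. Therefore $\epsilon_j = \epsilon'_j$ for all $j$, which is the second half of the claim, and moreover $f \coloneqq f_\epsilon = f_{\epsilon'}$, whence $\iota_n(x) \cap \iota_n(x') = f^{-1}(S_q \cap S_{q'})$. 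Since $f$ is surjective, emptiness of this set is equivalent to $S_q \cap S_{q'} = \emptyset$.

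It then remains to check that $S_q \cap S_{q'} = \emptyset$ forces $q' \equiv q \pm 1 \bmod m$. I would apply the bijection $z \mapsto 2^{-1} z$ of $\ZZ_m$ (valid since $2$ is a unit), which carries $S_q = \{q + 2t : 0 \le t \le \ell-1\}$ to the interval $\{2^{-1}q, 2^{-1}q+1, \ldots, 2^{-1}q+\ell-1\}$ of $\ell$ consecutive residues. The complement of such an interval in $\ZZ_m$ (recall $m = 2\ell+1$) is an interval of $\ell+1$ consecutive residues, and an $\ell$-interval is contained in it precisely when it starts $\ell$ or $\ell+1$ positions after the original one; thus $S_q \cap S_{q'} = \emptyset$ iff $2^{-1}q' - 2^{-1}q \in \{\ell, \ell+1\}$, i.e. $q' - q \in \{2\ell, 2\ell+2\} \equiv \{-1, +1\} \bmod m$, which is the desired conclusion.

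The only place demanding some care is this last elementary computation with arithmetic progressions modulo the odd number $m$ — in particular pinning down the set $\{-1,+1\}$ exactly rather than something slightly larger — so I would write out the complement of an $\ell$-interval explicitly and track the shifts; everything else reduces to the determinant computation and the surjectivity of $f_\epsilon$ already essentially present in the proof of \cref{lem: generate maxindepsets}.
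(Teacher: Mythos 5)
Your proof is correct, and it takes a genuinely different route from the paper's. The paper first establishes $q' \equiv q \pm 1$ by picking the specific vertex $(q, 0, \ldots, 0)$ and, after a parity normalization (assume $q - q' \bmod m$ is even, always achievable by swapping $x$ and $x'$ since $m$ is odd), arguing that this vertex avoids $\iota_n(x')$ only when $q - q' \equiv m - 1$; the $\epsilon$-part is then handled second via the explicit witness $(q-1, \epsilon_1, 0, \ldots, 0)$. Your argument inverts this order and replaces both ad hoc witness constructions with a structural observation: when $\epsilon_j \neq \epsilon'_j$, the joint map $v \mapsto (f_\epsilon(v), f_{\epsilon'}(v))$ restricted to coordinates $1$ and $j+1$ has determinant $\epsilon'_j - \epsilon_j = \pm 2$, a unit modulo odd $m$, so it is surjective onto $\ZZ_m^2$ and a common point of $\iota_n(x)$ and $\iota_n(x')$ must exist. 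That disposes of the $\epsilon$-part in one stroke, after which the $q$-part reduces, via surjectivity of the now common $f$, to disjointness of the residue sets $S_q$ and $S_{q'}$; conjugating by $2^{-1}$ turns this into the elementary fact that two $\ell$-intervals in $\ZZ_{2\ell+1}$ are disjoint precisely when their left endpoints differ by $\ell$ or $\ell+1$, whence $q' - q \in \{2\ell, 2\ell+2\} \equiv \{\pm 1\} \bmod m$. Both proofs are short and correct; yours is slightly more conceptual (the determinant and surjectivity arguments make the mechanism transparent and avoid tracking the parity WLOG), while the paper's is more hands-on, naming the offending vertices explicitly.
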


\begin{proof}
Suppose $\iota_n(x) \cap \iota_n(x') = \emptyset$. Without loss of generality, assume that $q - q' \mod m \in \{ 0, \ldots, m-1 \}$ is even, otherwise swap the roles of $x$ and $x'$ and observe that $m$ being odd implies that $q' - q \mod m \in \{ 0, \ldots, m-1 \}$ has a different parity than $q - q' \mod m$. Now consider the vertex $u \coloneqq (q, 0, \ldots, 0) \in V(\ZZ_m^n)$. Since it obviously belongs to $\iota_n(x)$, it must not belong to $\iota_n(x')$ by assumption, so $u_1 + \sum_{i = 1}^{n-1} \epsilon'_i u_{i+1} = q = q' + (q - q') \notin \{ q', q'+2, \ldots, q'+m-3 \}$ mod $m$. Subtracting $q'$, this excludes $0, 2, \ldots, m-3$ as possible values for the even number $q - q' \mod m \in \{ 0, \ldots, m-1 \}$, leaving only $q - q' \equiv m-1$ mod $m$ and thus, $q' \equiv q + 1 \mod m$.
	
	
Now if $\epsilon_1 \neq \epsilon'_1$, consider the vertex $v \coloneqq (q-1, \epsilon_1, 0, \ldots, 0) \in V(\ZZ_m^n)$. Then $v_1 + \sum_{i = 1}^{n-1} \epsilon_i v_{i+1} = q - 1 + \epsilon_1^2 = q \in \{ q, q+2, \ldots, q+m-3 \}$ mod $m$ and $v_1 + \sum_{i = 1}^{n-1} \epsilon'_i v_{i+1} = q - 1 - \epsilon_1^2 = q - 2 \in \{ q+1, q+3, \ldots, q+m-2 \} = \{ q', q'+2, \ldots, q'+m-3 \}$ mod $m$. This means that $v \in \iota_n(x) \cap \iota_n(x')$. By symmetry, the same conclusion also holds if $\epsilon_j \neq \epsilon'_j$ for some $j \in [n-1]$. Hence $\iota_n(x) \cap \iota_n(x') = \emptyset$ implies that $\epsilon_j = \epsilon'_j$ for all $j \in [n-1]$.
\end{proof}

\begin{lemma} \label{lem: no further maxindepsets}
For $m, n \in \NN$ with $m \geq 3$ odd, the map $\iota_n$ from \cref{lem: generate maxindepsets} is a bijection.
\end{lemma}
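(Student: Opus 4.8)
The map $\iota_n$ is well-defined and injective by \cref{lem: generate maxindepsets}, so the only thing left to prove is surjectivity: every $I \in \II^*(\ZZ_m^n)$ lies in $\im(\iota_n)$. The plan is to induct on $n$. For $n = 1$ a maximum independent set $I$ of $\ZZ_m^1 = C_m$ has $\ell$ vertices; listing them in cyclic order with (cyclic) gaps $g_1, \ldots, g_\ell \geq 2$ (by independence) summing to $m = 2\ell+1$ forces exactly one gap to equal $3$ and the remaining $\ell-1$ gaps to equal $2$, so $I = \{ a, a+2, \ldots, a+m-3 \} \mod m$, where $a$ is the vertex immediately following the gap of size $3$. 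This is precisely $\iota_1(a)$, so $\iota_1$ is onto.

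For the inductive step, assume $\iota_{n-1}$ is a bijection and let $I \in \II^*(\ZZ_m^n)$; by \cref{lem: max size of maxindepset} together with \cref{lem: generate maxindepsets} we have $\abs I = m^{n-1}\ell$. Partition $V(\ZZ_m^n)$ along the last coordinate into $V_q \coloneqq \{ v \colon v_n = q \}$ for $q \in \ZZ_m$, each inducing a copy of $\ZZ_m^{n-1}$ via $(v_1, \ldots, v_{n-1}, q) \mapsto (v_1, \ldots, v_{n-1})$, exactly as in the proof of \cref{lem: max size of maxindepset}. Since $\abs I = \sum_{q \in \ZZ_m} \abs{I \cap V_q}$ and each $\abs{I \cap V_q} \leq m^{n-2}\ell$ by \cref{lem: max size of maxindepset}, equality must hold for every $q$, so $I \cap V_q$ corresponds to a set $\tilde I_q \in \II^*(\ZZ_m^{n-1})$. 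By the induction hypothesis, $\tilde I_q = \iota_{n-1}(r_q, \delta^q_1, \ldots, \delta^q_{n-2})$ for a suitable $(r_q, \delta^q) \in \ZZ_m \times \{\pm1\}^{n-2}$.

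For each $q \in \ZZ_m$ the vertex $(v_1, \ldots, v_{n-1}, q)$ is adjacent to $(v_1, \ldots, v_{n-1}, q+1)$, so consecutive slices $V_q$ and $V_{q+1}$ are joined by a perfect matching and independence of $I$ forces $\tilde I_q \cap \tilde I_{q+1} = \emptyset$ for all $q$ (indices mod $m$). Applying \cref{lem: only disjoint if shifted} in $\ZZ_m^{n-1}$, we get $\delta^q_i = \delta^{q+1}_i$ for all $i \in [n-2]$ and $r_{q+1} \equiv r_q \pm 1 \pmod m$. Hence all $\delta^q$ agree, say $\delta^q = (\epsilon_1, \ldots, \epsilon_{n-2})$, while $q \mapsto r_q$ changes by exactly $\pm 1$ along every step of the cycle on $\ZZ_m$. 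As these $m$ steps of $\pm 1$ sum to $0 \bmod m$ and $m$ is odd, they must all equal $+1$ or all equal $-1$, so $r_q \equiv r_0 + cq \pmod m$ for a fixed $c \in \{\pm1\}$. Collecting the slices,
\[
	I = \Bigl\{ v \colon v_1 + \textstyle\sum_{i=1}^{n-2} \epsilon_i v_{i+1} - c v_n \in \{ r_0, r_0 + 2, \ldots, r_0 + m - 3 \} \mod m \Bigr\} = \iota_n(r_0, \epsilon_1, \ldots, \epsilon_{n-2}, -c),
\]
which completes the induction and shows $\iota_n$ is surjective, hence bijective.

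The genuinely delicate point is the appeal to \cref{lem: only disjoint if shifted}: it is what forces the ``direction parameters'' $\delta^q$ of neighbouring slices to be globally consistent and pins down the offsets $r_q$ up to a single overall sign; without it one would be left fighting a path-independence issue when trying to reconstruct $I$ from its lines. The parity argument producing $r_q \equiv r_0 + cq$ and the bookkeeping re-indexing $\epsilon_{n-1} \coloneqq -c$ of the last coordinate are then routine.
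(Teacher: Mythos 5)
Your proof is correct and follows the paper's overall strategy: induct on $n$, slice by the last coordinate, use \cref{lem: max size of maxindepset} to show each slice is a maximum independent set of $\ZZ_m^{n-1}$, apply the induction hypothesis and \cref{lem: only disjoint if shifted} to consecutive slices, and then reconstruct the parameter vector. The one place where you diverge from the paper — and in my view improve on it — is in forcing the offsets $r_q$ to be monotone around $\ZZ_m$. The paper establishes $\{p_{q-1}, p_{q+1}\} = \{p_q - 1, p_q + 1\}$ by contradiction: assuming $p_{q-1} = p_{q+1}$, it exhibits a one-dimensional fiber $V_{\tilde w}$ on which $I$ can occupy at most an independent set of a path on $m-3$ vertices, which violates maximality. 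Your argument instead observes that the increments $s_q = r_{q+1} - r_q \in \{\pm 1\}$ (well defined as integers since $m \geq 3$ rules out $1 \equiv -1$) must sum to a multiple of $m$ around the cycle, and since their sum is an odd integer in $[-m, m]$ it must be $\pm m$, hence all increments share the same sign. This global parity argument is shorter, more conceptual, and sidesteps the fiber construction entirely; the one thing it sacrifices is the explicit structural information (which fiber fails and why) that the paper's local argument provides, but that information is not reused later. The reconstruction $\epsilon_{n-1} \coloneqq -c = r_0 - r_1$ matches the paper's $\epsilon_{n-1} \coloneqq p_0 - p_1$.
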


\begin{proof}
We again write $\ell \coloneqq \lfloor m/2 \rfloor$. It only remains to show that $\iota_n$ is surjective. Recall that while proving its well-definedness in \cref{lem: generate maxindepsets}, we already showed that $\abs I = m^{n-1} \ell$ for every $I \in \im(\iota_n) \subset \II^*(\ZZ_m^n)$, so the upper bound on the size of independent sets in \cref{lem: max size of maxindepset} is indeed attained for every $n$. We thus know that every $I \in \II^*(\ZZ_m^n)$ has $\abs I = m^{n-1} \ell$. 
	
We proceed by induction on $n$. For $n = 1$, the statement is trivial.
Suppose that for some $n > 1$, the function $\iota_{n-1}$ is a bijection, and let $I \in \II^*(\ZZ_m^n)$ be arbitrary. Partition $V(\ZZ_m^n)$ into $V_q \coloneqq \{ v \in V(\ZZ_m^n) \colon v_n = q \}$ for $q \in \ZZ_m$ and define the isomorphisms $\phi_q \colon \ZZ_m^n[V_q] \to \ZZ_m^{n-1} ,\, v \mapsto (v_1, \ldots, v_{n-1})$. We denote the independent set $\phi_q(I \cap V_q)$ as $I_q \in \II(\ZZ_m^{n-1})$ and observe that trivially, $\abs I = \sum_{q \in \ZZ_m} \abs {I_q}$. However, the sets $I_q$ can have at most $m^{n-2} \ell$ vertices by \cref{lem: max size of maxindepset}, so in order to achieve $\abs I = m^{n-1} \ell$, they must all be maximum independent sets in $\ZZ_m^{n-1}$. The induction hypothesis therefore guarantees that each $I_q$ has a preimage under $\iota_{n-1}$, which we will denote as $x_q \coloneqq \iota_{n-1}^{-1}(I_q) \in \ZZ_m \times \{ \pm 1 \}^{n-2}$. We refer to its first component as $p_q \in \ZZ_m$.

We now claim the following:

\begin{center}
\begin{minipage}{0.8\textwidth}
Except for the first component, all $x_q$ are identical. The first components satisfy $p_q = p_0 + q(p_1 - p_0)$ mod $m$ for all $q \in \ZZ_m$.
\end{minipage}
\end{center}

Let us first prove that $\{ p_{q-1}, p_{q+1} \} = \{ p_q-1, p_q+1 \}$ mod $m$ for every $q \in \ZZ_m$. It is easy to see that $\iota_{n-1}(x_q) = I_q$ and $\iota_{n-1}(x_{q - 1}) = I_{q - 1}$ must be disjoint as otherwise, $u \in I_q \cap I_{q - 1}$ for some $u \in V(\ZZ_m^{n-1})$ implies $(u, q), (u, q - 1) \in I$ and contradicts $I$ being independent. Similarly, $I_q \cap I_{q + 1} = \emptyset$. So \cref{lem: only disjoint if shifted} guarantees that all $x_q$ are identical except for their first component, for which $\{ p_{q-1}, p_{q+1} \} \subset \{ p_q - 1, p_q + 1 \}$ mod $m$ must hold.

For a proof by contradiction, assume that $p_{q-1} = p_{q+1} = p_q - 1$ mod $m$. Partition $V(\ZZ_m^n)$ into $V_w \coloneqq \{ v \in V(\ZZ_m^n) \colon (v_1, \ldots, v_{n-1}) = w \}$ for all $w \in V(\ZZ_m^{n-1})$. Observe that $\ZZ_m^n[V_w] \isom \ZZ_m^1$ for all $w \in V(\ZZ_m^{n-1})$. Consider now $\tilde w \coloneqq (p_q - 2, 0, \ldots, 0)$. We immediately observe that $p_q - 2 \notin \{ p_q, p_q + 2, \ldots, p_q + m - 3 \}$, so $\tilde w \notin \iota_{n-1}(x_q) = I_q = \phi_q(I \cap V_q)$ and $(\tilde w, q) \notin I$. However, we also see that $p_q - 2 \notin \{ p_q - 1, p_q + 1, \ldots, p_q + m - 4 \}$, so $\tilde w \notin \iota_{n-1}(x_{q \pm 1}) = I_{q \pm 1} = \phi_{q \pm 1}(I \cap V_{q \pm 1})$ and neither $(\tilde w, q-1)$ nor $(\tilde w, q+1)$ belong to $I$. But then $I \cap V_{\tilde w}$ must be an independent set in the path $\ZZ_m^n[V_{\tilde w}] \setminus \{ (\tilde w, q-1), (\tilde w, q), (\tilde w, q+1) \}$ on $m-3$ vertices and can thus have at most $\abs {I \cap V_{\tilde w}} \leq (m-3)/2 < \ell$ vertices. Summing $\abs {I \cap V_w}$ over all $w \in V(\ZZ_m^{n-1})$ then yields $\abs I = \sum_{w \in V(\ZZ_m^{n-1})} \abs {I \cap V_w} < m^{n-1} \ell$ in contradiction to $I \in \II^*(\ZZ_m^n)$. The same contradiction arises when assuming $p_{q-1} = p_{q+1} = p_q + 1$ mod $m$ and considering $\tilde w \coloneqq (p_q - 1, 0, \ldots, 0) \in V(\ZZ_m^{n-1})$ instead. This proves that $\{ p_{q-1}, p_{q+1} \} = \{ p_q-1, p_q+1 \}$ mod $m$ for every $q \in \ZZ_m$.

The equality $p_q = p_0 + q(p_1 - p_0)$ mod $m$ is trivially true for $q \in \{ 0, 1 \}$. In general, it follows by induction on $q$: Suppose that for some $q > 0$ the claim is true for $q-1$ and $q$. Then $\{ p_{q-1}, p_{q+1} \} = \{ p_q-1, p_q+1 \} = \{ p_0 + q(p_1 - p_0) - 1, p_0 + q(p_1 - p_0) + 1 \}$. Recalling that $p_1 - p_0 \in \{ \pm 1 \}$ by \cref{lem: only disjoint if shifted}, this set is exactly $\{ p_0 + (q-1)(p_1 - p_0), p_0 + (q+1)(p_1 - p_0) \}$ and as $p_{q-1} = p_0 + (q-1)(p_1 - p_0)$ by the induction hypothesis, we must have $p_{q+1} = p_0 + (q+1)(p_1 - p_0)$. This concludes the proof of the claim above.

\medskip
Finally, we show in the following that for $x_0 = (p_0, \epsilon_1, \ldots, \epsilon_{n-2})$ and $\epsilon_{n-1} \coloneqq p_0 - p_1 \in \{ \pm 1 \}$ mod $m$, we have $\iota_n(p_0, \epsilon_1, \ldots, \epsilon_{n-1}) = I$. We prove this equality for the respective intersections with $V_q = \{ v \in V(\ZZ_m^n) \colon v_n = q \}$ for all $q \in \ZZ_m$. So let $q \in \ZZ_m$ and $v \in V_q$ be arbitrary. Then by definition of $\iota_n$, we have $v \in \iota_n(p_0, \epsilon_1, \ldots, \epsilon_{n-1}) \cap V_q$ if and only if
\begin{align} \label{eq: iota(new x)}
	&v_1 + \sum_{i = 1}^{n-1} \epsilon_i v_{i+1} \notag\\
	={}&v_1 + \sum_{i = 1}^{n-2} \epsilon_i v_{i+1} + q(p_0 - p_1) \in \{ p_0, p_0 + 2, \ldots, p_0 + m - 3 \} \mod m
	\,.
\end{align}
On the other hand, we have $v \in I \cap V_q$ if and only if $(v_1, \ldots, v_{n-1}) \in \phi_q(I \cap V_q) = I_q = \iota_{n-1}(x_q)$ by definition of $\phi_q$. Also note that by the claim above, $x_0$ and $x_q$ share all but their first components, so $x_q = (p_q, \epsilon_1, \ldots, \epsilon_{n-1})$. This means that $v \in I \cap V_q$ is equivalent to
\[
	v_1 + \sum_{i = 1}^{n-2} \epsilon_i v_{i+1} \in \{ p_q, p_q + 2, \ldots, p_q + m - 3 \} \mod m
	\,.
\]
We can now use our claim to replace $p_q$ by $p_0 + q(p_1 - p_0) = p_0 - q(p_0 - p_1)$. Adding $q(p_0 - p_1)$ to both sides of this relation, we obtain exactly the condition in (\ref{eq: iota(new x)}). This shows that $\iota_n(x_0, \epsilon_{n-1}) \cap V_q = I \cap V_q$ for all $q \in \ZZ_m$ and so indeed, $\iota_n(x_0, \epsilon_{n-1}) = I$ holds. As $I \in \II^*(\ZZ_m^n)$ was chosen arbitrarily, this concludes the proof that $\iota_n$ is surjective and thus, a bijection.
\end{proof}

For $m = 3$, we can establish an upper bound on the size of the intersection of two maximum independent sets. Limiting this overlap is crucial to ensure that the process described above yields different independent sets $A \cup B$ when starting with different maximum independent sets $I$, which leads to the factor of $3 \cdot 2^{n-1}$ in the lower bound.
\begin{lemma} \label{lem: intersection maxindepsets}
For $n \in \NN$, let $I, I' \in \II^*(\ZZ_3^n)$ be distinct. Then $\abs {I \cap I'} \leq 3^{n-2}$.
\end{lemma}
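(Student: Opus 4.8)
The plan is to use the classification of maximum independent sets from \cref{lem: no further maxindepsets}. Since $m = 3$ forces $\ell = 1$, the set $\{ q, q+2, \ldots, q+m-3 \}$ appearing in the definition of $\iota_n$ in \cref{lem: generate maxindepsets} collapses to $\{ q \}$, so $\iota_n(q, \epsilon_1, \ldots, \epsilon_{n-1})$ is precisely the level set $\{ v \in V(\ZZ_3^n) : L_\epsilon(v) \equiv q \mod 3 \}$ of the $\ZZ_3$-linear functional $L_\epsilon(v) \coloneqq v_1 + \sum_{i=1}^{n-1} \epsilon_i v_{i+1}$. By \cref{lem: no further maxindepsets} we may write $I = \iota_n(q, \epsilon)$ and $I' = \iota_n(q', \epsilon')$ with $(q, \epsilon) \neq (q', \epsilon')$, and then $I \cap I'$ is exactly the solution set in $\ZZ_3^n$ of the system $L_\epsilon(v) \equiv q$, $L_{\epsilon'}(v) \equiv q'$ over the field $\ZZ_3$.

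First I would handle the degenerate case $\epsilon = \epsilon'$: then necessarily $q \neq q'$, and no vertex $v$ can satisfy $L_\epsilon(v) \equiv q$ and $L_\epsilon(v) \equiv q'$ at once, so $I \cap I' = \emptyset$ and the bound holds trivially.

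The main case is $\epsilon \neq \epsilon'$, and the heart of the argument is to show that $L_\epsilon$ and $L_{\epsilon'}$ are linearly independent as functionals on $\ZZ_3^n$. Both have coefficient $1$ on the coordinate $v_1$, so any relation $aL_\epsilon + bL_{\epsilon'} = 0$ yields $a + b \equiv 0 \mod 3$, whence $a(L_\epsilon - L_{\epsilon'}) = 0$. Now $L_\epsilon - L_{\epsilon'} = \sum_{i \colon \epsilon_i \neq \epsilon'_i} (\epsilon_i - \epsilon'_i) v_{i+1}$ is a nonzero functional, since whenever $\epsilon_i \neq \epsilon'_i$ the difference $\epsilon_i - \epsilon'_i$ lies in $\{ \pm 2 \}$ and is therefore nonzero modulo $3$ — this is exactly where the oddness of $m$ is used. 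Hence $a \equiv b \equiv 0 \mod 3$, proving independence. Consequently the $\ZZ_3$-linear map $\Phi \colon \ZZ_3^n \to \ZZ_3^2$ given by $v \mapsto (L_\epsilon(v), L_{\epsilon'}(v))$ is surjective, so each of its fibres has size $3^n / 3^2 = 3^{n-2}$; in particular $\abs{I \cap I'} = \abs{\Phi^{-1}(q, q')} = 3^{n-2}$, which is in fact an equality.

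I do not anticipate a serious obstacle here: once one observes that for $m = 3$ the maximum independent sets are precisely the affine hyperplanes of $\ZZ_3^n$ cut out by the functionals $L_\epsilon$, the statement becomes a short exercise in linear algebra over the field $\ZZ_3$. The only step requiring a little care is the linear-independence check above, and more specifically the remark that a difference of two distinct elements of $\{ \pm 1 \}$ is a unit in $\ZZ_3$; note also that the main case is vacuous for $n = 1$, where $\epsilon = \epsilon'$ is forced.
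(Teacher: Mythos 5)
Your proof is correct and takes essentially the same approach as the paper: both recognize that for $m=3$ the sets $\iota_n(q,\epsilon)$ are level sets of the $\ZZ_3$-linear functional $L_\epsilon(v) = v_1 + \sum_i \epsilon_i v_{i+1}$, so that $I \cap I'$ is the solution set of two linear congruences modulo $3$, and both establish that these two constraints are independent whenever $I \neq I'$. You phrase the count abstractly via linear independence, surjectivity of $v \mapsto (L_\epsilon(v), L_{\epsilon'}(v))$, and fibre size, whereas the paper performs an explicit row reduction (adding and subtracting the two conditions, then isolating one coordinate from each resulting equation) and counts free coordinates by hand; your version also makes the equality $\abs{I \cap I'} = 3^{n-2}$ in the case $\epsilon \neq \epsilon'$ explicit, though only the inequality is needed.
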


\begin{proof}
Let $(q, \epsilon_1, \ldots, \epsilon_{n-1}) \coloneqq \iota_n^{-1}(I)$ and $(q', \epsilon'_1, \ldots, \epsilon'_{n-1}) \coloneqq \iota_n^{-1}(I')$. Suppose $v \in I \cap I'$, then adding and subtracting the conditions for $v \in I$ and $v \in I'$ yields
\[
	2v_1 + \sum_{i = 1}^{n-1} (\epsilon_i + \epsilon'_i) v_{i+1}
	= q + q' \mod 3
	\quad \text{and} \quad
	\sum_{i = 1}^{n-1} (\epsilon_i - \epsilon'_i) v_{i+1}
	= q - q' \mod 3
	\,.
\]
Let $J_+ \coloneqq \{ 0 \} \cup \{ i \in [n-1] \colon \epsilon_i = \epsilon'_i \}$ and $J_- \coloneqq \{ i \in [n-1] \colon \epsilon_i = -\epsilon'_i \}$. Then this is equivalent to $2 \sum_{i \in J_+} v_{i+1} = q + q'$ mod $3$ and $2 \sum_{i \in J_-} v_{i+1} = q - q'$ mod $3$. Multiplying both equations by $2 = -1$ mod $3$, we get
\[
	\sum_{i \in J_+} v_{i+1}
	= -q - q' \mod 3
	\quad \text{and} \quad
	\sum_{i \in J_-} v_{i+1}
	= -q + q' \mod 3
	\,.
\]
Observe that trivially, $J_+ \neq \emptyset$ and let $j_+ \coloneqq \max J_+$. Now suppose that that $J_- = \emptyset$. This immediately implies $\epsilon_i = \epsilon'_i$ for all $i$ and, by the second equation above, also that $q = q'$. Thus, $\iota_n^{-1}(I) = \iota_n^{-1}(I')$ in contradiction to $I \neq I'$. So $J_-$ is also nonempty and we can let $j_- \coloneqq \max J_-$. Then for each of the $3^{n-2}$ ways to choose the entries $v_{i+1}$ for $i \in \{ 0, 1, \ldots, n-1 \} \setminus \{ j_+, j_- \}$, there is at most one choice for $v_{j_++1}$ and $v_{j_-+1}$ (namely $v_{j_++1} = -q - q' - \sum_{i \in J_+ \setminus \{ j_+ \}}v_{i+1}$ mod $3$ and $v_{j_-+1} = -q + q' - \sum_{i \in J_- \setminus \{ j_- \}}v_{i+1}$ mod $3$) such that $v$ is in $I \cap I'$. This proves that $\abs {I \cap I'} \leq 3^{n-2}$.
\end{proof}

This concludes our analysis of maximum independent sets in $\ZZ_m^n$. We now turn our attention to the selection of defects from the subgraph $H_n$. Here, we observe that we can essentially assume the defects to be chosen independently as long as their number is small compared to the square root of the number of vertices available in $H_n$.
\begin{lemma} \label{lem: defect selection}
For every $n \in \NN$, let $M_n, K_n \in \NN$ and $H_n$ be a graph on $M_n$ vertices such that $(\Delta(H_n)+1) K_n^2 \leq o(M_n)$. Then for every $k \leq K_n$, we have
\[
	\abs {\II_k(H_n)}
	\geq (1 - o(1)) \frac {M_n^k}{k!}
	\,.
\]
\end{lemma}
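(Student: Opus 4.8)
The plan is a one-line first-moment argument: almost every $k$-element subset of $V(H_n)$ is automatically independent, because $H_n$ is very sparse on the scale of $K_n$. First I would bound the number of $k$-subsets of $V(H_n)$ that are \emph{not} independent. Every such subset contains both endpoints of some edge of $H_n$, so by a union bound over $E(H_n)$ this number is at most $\abs{E(H_n)}\binom{M_n-2}{k-2}$ (which we read as $0$ when $k\le 1$, since then there are no non-independent $k$-subsets at all). Combining this with the handshake bound $2\abs{E(H_n)}\le\Delta(H_n)M_n$ and with $\binom{N}{j}\le N^j/j!$ yields
\[
\abs{\II_k(H_n)} \;\ge\; \binom{M_n}{k} \;-\; \frac{\Delta(H_n)M_n}{2}\cdot\frac{M_n^{k-2}}{(k-2)!} \;=\; \binom{M_n}{k} \;-\; \frac{M_n^k}{k!}\cdot\frac{\Delta(H_n)\,k(k-1)}{2M_n}\,.
\]

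It then remains to compare both terms with $M_n^k/k!$. For the binomial coefficient I would use $M_n(M_n-1)\cdots(M_n-k+1)\ge (M_n-k)^k = M_n^k(1-k/M_n)^k\ge M_n^k(1-k^2/M_n)$ by Bernoulli's inequality (valid once $k\le M_n$, which holds for all large $n$, since $K_n^2\le(\Delta(H_n)+1)K_n^2=o(M_n)$ forces $K_n<M_n$), giving $\binom{M_n}{k}\ge\frac{M_n^k}{k!}(1-k^2/M_n)$. For the subtracted term I would bound $\Delta(H_n)k(k-1)/(2M_n)\le\Delta(H_n)K_n^2/(2M_n)$. The hypothesis $(\Delta(H_n)+1)K_n^2\le o(M_n)$ now makes both $k^2/M_n\le K_n^2/M_n$ and $\Delta(H_n)K_n^2/(2M_n)$ into $o(1)$ quantities \emph{uniformly} over all $k\le K_n$, and adding the two estimates gives $\abs{\II_k(H_n)}\ge\frac{M_n^k}{k!}\bigl(1-o(1)\bigr)$ with an $o(1)$ that does not depend on the particular $k\le K_n$.

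I do not expect a genuine obstacle here; the only points that merit a sentence of care are that the two error terms are uniform in $k$ (they are, since each is monotone in $k$ and $k\le K_n$) and the degenerate small cases $k\in\{0,1\}$, for which $\abs{\II_k(H_n)}$ equals $1$ respectively $M_n$ and the inequality holds with no error at all. One could alternatively run the argument probabilistically, estimating the probability that a uniformly random $k$-subset contains an edge by $\binom{k}{2}\binom{M_n}{2}^{-1}\abs{E(H_n)}$, but the deterministic count above is cleaner and, in particular, does not require \cref{lem: FKG}.
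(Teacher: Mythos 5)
Your proof is correct, but it takes a genuinely different route from the paper's. The paper's argument is constructive: pick the $k$ vertices of the independent set one at a time, at the $(i+1)$-th step excluding the $i$ vertices chosen so far together with their at most $\Delta(H_n)i$ neighbours. This gives at least $\prod_{i=0}^{k-1}\bigl(M_n-(\Delta(H_n)+1)i\bigr)$ ordered choices, each independent $k$-set being produced exactly $k!$ times, and the product is compared to $M_n^k$ by a single application of Bernoulli's inequality. Your argument is subtractive: start from all $\binom{M_n}{k}$ subsets, remove via a union bound over $E(H_n)$ the at most $\abs{E(H_n)}\binom{M_n-2}{k-2}$ sets spanning an edge, and then separately compare $\binom{M_n}{k}$ to $M_n^k/k!$. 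Both reduce the error to a quantity controlled by $(\Delta(H_n)+1)K_n^2/M_n = o(1)$, uniformly in $k\le K_n$, so both are valid; the paper's sequential count is marginally more economical (one product estimate, no detour through the binomial coefficient), while yours is the more standard first-moment pattern. One small correction: your closing remark that the deterministic count avoids \cref{lem: FKG} is a red herring, since the paper's proof of this lemma does not invoke it either; that tool only enters later, in \cref{lem: estimate partition function}.
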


\begin{proof}
Let $\epsilon > 0$ and choose $n$ large enough to guarantee $(\Delta(H_n) + 1)K_n^2 \leq \epsilon M_n$. Now select $k$ vertices from $H_n$ one after the other, excluding from the choices for the $(i+1)$-th vertex the $i$ vertices chosen in previous steps as well as the at most $\Delta(H_n)i$ vertices adjacent to any vertex chosen in previous steps. This ensures that the union of all vertices chosen is a set in $\II_k(H_n)$, but every such set is produced exactly $k!$ times. Having at least $M_n - (\Delta(H_n) + 1)i$ choices for the $(i+1)$-th vertex, we find that
\[
	\abs {\II_k(H_n)}
	\geq \frac 1{k!} \prod_{i = 0}^{k - 1} (M_n - (\Delta(H_n) + 1)i)
	= \frac {M_n^k}{k!} \prod_{i = 0}^{k - 1} \left( 1 - \frac {(\Delta(H_n) + 1)i}{M_n} \right)
	\,.
\]
Now we use the fact that $i \leq k \leq K_n$ and $(\Delta(H_n) + 1)K_n / M_n \leq \epsilon/K_n$ to obtain
\[
	\prod_{i = 0}^{k - 1} \left( 1 - \frac {(\Delta(H_n) + 1)i}{M_n} \right)
	\geq \prod_{i = 0}^{k - 1} \left( 1 - \frac {\epsilon}{K_n} \right)
	\geq \left( 1 - \frac {\epsilon}{K_n} \right)^{K_n}
	\geq 1 - \epsilon
\]
by the Bernoulli inequality.
\end{proof}

Finally, we will also use the following two immediate consequences of Chebyshev's inequality, applying it to a symmetric binomial and a Poisson distribution, respectively. Together, they allow us to bring the final lower bound into a closed form.
\begin{lemma} \label{lem: binomial}
For all $\epsilon > 0$, there exists an $n_0 \in \NN$ such that for all $n \geq n_0$ and all $y \in \NN$, the following holds:
\[
	2^{-y} \sum_{b = y/2 - n\sqrt{y/4}}^{y/2 + n\sqrt{y/4}} \binc{y}{b}
	\geq 1 - \epsilon
	\,.
\]
\end{lemma}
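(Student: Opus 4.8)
The plan is to apply Chebyshev's inequality to a binomial random variable $X \sim \mathrm{Bin}(y, 1/2)$, for which the displayed sum is exactly $\PP[\,|X - y/2| \le n\sqrt{y/4}\,]$, and then observe that the tail bound one obtains is uniform in $y$. First I would recall that $\EE[X] = y/2$ and $\mathrm{Var}(X) = y/4$, so Chebyshev gives
\[
	\PP\!\left[ \left| X - \tfrac y2 \right| > n \sqrt{y/4} \right]
	\le \frac{\mathrm{Var}(X)}{n^2 \cdot (y/4)}
	= \frac{1}{n^2}
	\,.
\]
Crucially, the right-hand side $1/n^2$ does not depend on $y$ at all — the variance $y/4$ in the numerator is cancelled by the $(y/4)$ coming from the squared deviation radius. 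Hence for any $\epsilon > 0$ it suffices to choose $n_0$ with $1/n_0^2 < \epsilon$ (equivalently $n_0 > 1/\sqrt\epsilon$), and then for every $n \ge n_0$ and every $y \in \NN$ the complementary event has probability at least $1 - 1/n^2 \ge 1 - \epsilon$.

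The only genuine care needed is the bookkeeping translating the probabilistic statement back into the stated sum. One has $2^{-y}\sum_b \binom{y}{b} = \PP[X = b]$ summed over the relevant range, so
\[
	2^{-y} \sum_{b = y/2 - n\sqrt{y/4}}^{y/2 + n\sqrt{y/4}} \binc{y}{b}
	= \PP\!\left[ \left| X - \tfrac y2 \right| \le n\sqrt{y/4} \right]
	\ge 1 - \frac1{n^2}
	\,,
\]
where the summation is understood to run over integers $b$ in the stated interval; if the endpoints $y/2 \pm n\sqrt{y/4}$ are not integers, the sum ranges over all integers they enclose, and dropping to the closed interval inside the open one in Chebyshev's bound only helps. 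I would also note that when $y = 0$ the sum is the single term $\binom00 = 1 \ge 1 - \epsilon$ trivially, so no degenerate case causes trouble.

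There is essentially no obstacle here: the entire content is the observation that a symmetric binomial has variance exactly $y/4$, so a deviation window scaled by $\sqrt{y/4}$ yields a $y$-independent Chebyshev bound, which is precisely why the quantifier order ``$\exists n_0\ \forall n \ge n_0\ \forall y$'' is legitimate. The statement is deliberately packaged in this uniform-in-$y$ form because in the application it will be invoked with $y$ the (large, $n$-dependent) number of available vertices on the majority side, and one wants the concentration estimate to hold simultaneously for the whole range of $y$ that occurs. The analogous Poisson version in the companion lemma follows the same template, using $\mathrm{Var} = \lambda$ against a deviation radius scaled by $\sqrt\lambda$.
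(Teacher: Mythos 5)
Your proof is correct and is exactly the argument the paper has in mind: the authors explicitly introduce this lemma (together with the Poisson analogue) as ``immediate consequences of Chebyshev's inequality, applying it to a symmetric binomial,'' and give no further detail. Your observation that the variance $y/4$ cancels the squared deviation radius, making the Chebyshev bound $1/n^2$ uniform in $y$, is precisely the point that legitimises the quantifier order in the statement.
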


\begin{lemma} \label{lem: Poisson}
For all $\epsilon > 0$, there exists an $n_0 \in \NN$ such that for all $n \geq n_0$ and all $\lambda > 0$, the following holds:
\[
	\exp(-\lambda) \sum_{k = \lambda - n \sqrt {\lambda}}^{\lambda + n \sqrt {\lambda}} \frac {\lambda^k}{k!}
	\geq 1 - \epsilon
	\,.
\]
\end{lemma}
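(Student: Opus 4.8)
The plan is to read the left-hand side as a Poisson tail probability and then apply Chebyshev's inequality, precisely as the sentence introducing the lemma indicates. First I would fix $\epsilon > 0$ and let $X$ denote a random variable with the Poisson distribution of parameter $\lambda > 0$, so that $\PP[X = k] = \exp(-\lambda)\lambda^k/k!$ for every $k \in \NN_0$. Interpreting the displayed sum as running over the nonnegative integers $k$ in the interval $[\lambda - n\sqrt\lambda,\ \lambda + n\sqrt\lambda]$, the left-hand side equals $\PP[\abs{X - \lambda} \leq n\sqrt\lambda]$ when $\lambda - n\sqrt\lambda \geq 0$ and is at least this probability otherwise (if $\lambda - n\sqrt\lambda < 0$ the sum just starts at $k = 0$ and hence counts every outcome with $X \leq \lambda + n\sqrt\lambda$, which contains the event $\abs{X-\lambda}\leq n\sqrt\lambda$). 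So in all cases it is enough to bound $\PP[\abs{X-\lambda} > n\sqrt\lambda]$ from above.

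Next I would recall that a Poisson variable of parameter $\lambda$ has both mean and variance equal to $\lambda$, so Chebyshev's inequality yields
\[
	\PP\big[\abs{X - \lambda} > n\sqrt\lambda\big]
	\leq \frac{\mathrm{Var}(X)}{\big(n\sqrt\lambda\big)^2}
	= \frac{\lambda}{n^2\lambda}
	= \frac{1}{n^2}
	\,.
\]
The key feature is that this bound is independent of $\lambda$. Choosing $n_0 \in \NN$ with $n_0 > 1/\sqrt\epsilon$ then gives, for every $n \geq n_0$ and every $\lambda > 0$, that the left-hand side of the claimed inequality is at least $1 - 1/n^2 \geq 1 - 1/n_0^2 > 1 - \epsilon$, which finishes the proof.

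There is no genuine obstacle here; the two points that merit a little care are the bookkeeping of which integers the sum ranges over (so that it is correctly identified with, or bounded below by, the two-sided deviation probability, including the degenerate regime where $\lambda$ is so small that $\lambda - n\sqrt\lambda$ drops below $0$), and the observation that the Chebyshev estimate $1/n^2$ is uniform in $\lambda$ — it is exactly this uniformity that lets a single threshold $n_0$ work simultaneously for all $\lambda > 0$. This argument runs in complete parallel to the proof of \cref{lem: binomial}, where the same reasoning is applied to a symmetric binomial distribution.
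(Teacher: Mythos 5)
Your proposal is correct and uses precisely the approach the paper indicates: the paper states this lemma as an "immediate consequence of Chebyshev's inequality" applied to a Poisson distribution, and your argument — identifying the sum as a Poisson deviation probability, bounding it by $\mathrm{Var}(X)/(n\sqrt\lambda)^2 = 1/n^2$ uniformly in $\lambda$, and choosing $n_0 > 1/\sqrt\epsilon$ — fills in exactly those details.
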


We can now combine all of this to prove the desired asymptotic lower bound of at least $(1-o(1)) \cdot 3 \cdot 2^{n-1} \cdot 2^{3^{n-1}} \cdot \exp( (3/2)^{n-1} )$ independent sets in $\ZZ_3^n$.

\begin{proof}[Proof of~\cref{thm: lowerboundK3}]
Let $x \coloneqq (q, \epsilon_1, \ldots, \epsilon_{n-1}) \in \ZZ_3 \times \{ \pm 1 \}^{n-1}$ be arbitrary and set $I \coloneqq \iota_n(x)$ as well as $U \coloneqq \{ v \in V(\ZZ_3^n) \colon v_1 + \sum_{i = 1}^{n-1} \epsilon_i v_{i+1} \in \{ q-2, q-1 \} \mod 3 \} = V(\ZZ_3^n) \setminus I$. Now consider the graph $H_n \coloneqq \ZZ_3^n[U]$ on $M_n \coloneqq 2 \cdot 3^{n-1}$ vertices. It is easy to see that $H_n$ is $n$-regular, so we have $\Delta(H_n) = n$. Let $\lambda \coloneqq (3/2)^{n-1}$ and $K_n \coloneqq \lambda + n \sqrt{\lambda}$. Then we immediately observe that $(\Delta(H_n) + 1)K_n^2 \leq O(n\lambda^2) = O(n(9/4)^n) = o(3^n) = o(M_n)$. Selecting any $k \in [\lambda - n \sqrt{\lambda}, K_n]$, we thus have $\abs {\II_k(H_n)} \geq (1 - o(1)) \frac {M_n^k}{k!}$ by \cref{lem: defect selection}.

Now consider some set $A \in \II_k(H_n)$. As every vertex in $A \subset U$ has exactly $n$ edges to $I$, we have $\abs {N_{\ZZ_3^n}(A, I)} \leq n \abs A = nk$. Let $y \coloneqq 3^{n-1} - nk$ and select some $B \in I \setminus N_{\ZZ_3^n}(A, I)$ with $\abs B \in [y/2 - n \sqrt{y/4}, y/2 + n \sqrt{y/4}]$. Then $A \cup B \in \II(\ZZ_3^n)$ by construction and the number of distinct such $A \cup B$ is at least
\begin{align*}
	\sum_{k = \lambda - n\sqrt{\lambda}}^{\lambda + n\sqrt{\lambda}} \abs {\II_k(H_n)} \sum_{b = y/2 - n\sqrt{y/4}}^{y/2 + n\sqrt{y/4}} \binc{y}{b}
	&\geq (1 - o(1)) \sum_{k = \lambda - n\sqrt{\lambda}}^{\lambda + n\sqrt{\lambda}} \frac {M_n^k \cdot 2^{y}}{k!} \\
	&= (1 - o(1)) \cdot 2^{3^{n-1}} \sum_{k = \lambda - n\sqrt{\lambda}}^{\lambda + n\sqrt{\lambda}} \frac {(2 \cdot 3^{n-1})^k \cdot 2^{-nk}}{k!} \\
	&= (1 - o(1)) \cdot 2^{3^{n-1}} \sum_{k = \lambda - n\sqrt{\lambda}}^{\lambda + n\sqrt{\lambda}} \frac {\lambda^k}{k!} \\
	&\geq (1 - o(1)) \cdot 2^{3^{n-1}} \exp \left( \left( \tfrac 32 \right)^{n-1} \right)
\end{align*}
by \cref{lem: binomial,lem: Poisson}.

As $I \in \II^*(\ZZ_3^n)$ was arbitrary, we can actually obtain the $\abs {\II^*(\ZZ_3^n)}$-fold of this bound, which is exactly the desired statement, if we can show that an independent set $A \cup B$ produced by the above process starting from $I \in \II^*(\ZZ_3^n)$ cannot also be written as $A' \cup B'$ produced starting from $I' \in \II^*(\ZZ_3^n) \setminus \{ I \}$. For a proof by contradiction, suppose this were false and $A \cup B = A' \cup B'$. Then we would have
\[
	\abs {I \cap I'} 
	\geq \abs {B \cap B'} 
	\geq \abs B - \abs {A'}
	\geq y/2 - n\sqrt{y/4} - K_n 
	\geq 3^{n-1}/2 - o(3^n)
\]
and could ensure $\abs {I \cap I'} \geq 3^{n-1}/3 = 3^{n-2}$ by choosing $n$ sufficiently large. This, however, contradicts \cref{lem: intersection maxindepsets} and thus finishes the proof.
\end{proof}

A first natural guess would be to assume that the cases $m \geq 5$ can be treated similarly as for $m = 3$.
However, for most independent sets, the number of defects becomes so large that when choosing them as an independent subset of the graph~$H_n$ induced by two adjacent partition classes, we cannot essentially ignore the edges between these two classes anymore. In the terminology of \cref{lem: defect selection}, this graph $H_n$ has $M_n = 2m^{n-1}$ vertices, while we need to allow defect sets of size up to $K_n \geq (m/2)^{n-1}$. This violates the assumption $(\Delta(H_n) + 1)K_n^2 \leq o(M_n)$ of \cref{lem: defect selection} for $m \geq 5$.

For general odd $m$, we therefore only derive a less precise lower bound. 
Its proof relies on the following lemma, which functions as a rough lower bound on the number of independent sets $A$ in the graph $H = H_n$.

\begin{lemma} \label{lem: estimate partition function}
Let $H$ be a graph and $\lambda \in (0, 1/2]$. Then for $p \coloneqq \lambda/(1 + \lambda)$, we have
\[
	\sum_{A \in \II(H)} \lambda^{\abs A}
	\geq \frac {(1 - p^2)^{\abs {E(H)}}} {(1-p)^{\abs {V(H)}}}
	\geq \exp \left( p \abs {V(H)} - 2p^2 \abs {E(H)} \right)
	\,.
\]
\end{lemma}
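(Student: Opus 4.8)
The plan is to read the left-hand side as the partition function of the hard-core model on $H$ and to bound it from below via the FKG inequality. Put $p \coloneqq \lambda/(1+\lambda)$, so that $p \in (0, 1/3]$ since $\lambda \in (0,1/2]$, and let $X \subseteq V(H)$ be the random vertex set obtained by including each vertex independently with probability $p$; write $\PP$ for the corresponding product measure on $\{0,1\}^{V(H)}$. Since $\PP[X = A] = p^{\abs A}(1-p)^{\abs{V(H)}-\abs A}$ for every $A \subseteq V(H)$ and $p/(1-p) = \lambda$, summing only over independent sets yields
\[
	\PP[X \in \II(H)]
	= \sum_{A \in \II(H)} p^{\abs A}(1-p)^{\abs{V(H)}-\abs A}
	= (1-p)^{\abs{V(H)}} \sum_{A \in \II(H)} \lambda^{\abs A}
	\,.
\]
Thus the first claimed inequality is equivalent to $\PP[X \in \II(H)] \geq (1-p^2)^{\abs{E(H)}}$.

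To establish this, write $\{X \in \II(H)\} = \bigcap_{uv \in E(H)} A_{uv}$, where $A_{uv} \coloneqq \{x \in \{0,1\}^{V(H)} \colon x_u = 0 \text{ or } x_v = 0\}$ is the event that the edge $uv$ has an endpoint outside $X$. Each $A_{uv}$ is a decreasing event in the sense of \cref{lem: FKG}, and $\PP[A_{uv}] = 1 - p^2$ by independence of the coordinates. Applying \cref{lem: FKG} with the product measure $\PP$ — and iterating the two-event statement over the finitely many edges of $H$, using that a finite intersection of decreasing events is again decreasing — gives $\PP[X \in \II(H)] \geq \prod_{uv \in E(H)} \PP[A_{uv}] = (1-p^2)^{\abs{E(H)}}$, which together with the displayed identity proves the first inequality.

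For the second inequality it suffices, after taking logarithms, to verify the two elementary estimates $-\ln(1-p) \geq p$ and $-\ln(1-p^2) \leq 2p^2$. The first holds for every $p \in [0,1)$ because $\ln(1+t) \leq t$. For the second, the standard bound $-\ln(1-t) \leq t/(1-t)$ gives $-\ln(1-p^2) \leq p^2/(1-p^2) \leq 2p^2$, where the last step uses $p^2 \leq 1/9 \leq 1/2$, valid since $\lambda \leq 1/2$ forces $p \leq 1/3$. Multiplying the first estimate by $\abs{V(H)}$ and the second by $\abs{E(H)}$ and exponentiating yields precisely $(1-p^2)^{\abs{E(H)}}(1-p)^{-\abs{V(H)}} \geq \exp\bigl(p\abs{V(H)} - 2p^2\abs{E(H)}\bigr)$.

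The argument is short, and I do not expect a genuine obstacle. The only point that needs a word of care is the invocation of the FKG/Harris inequality with the non-uniform product measure of parameter $p$ rather than the uniform one, i.e.\ relying on the fact that \cref{lem: FKG} holds for an arbitrary product measure on $\{0,1\}^{V(H)}$ and for any finite collection of decreasing events; the remaining work is the routine bookkeeping with the two logarithmic inequalities above.
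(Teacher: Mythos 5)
Your proposal is correct and follows essentially the same route as the paper: both interpret the left-hand sum as the hard-core partition function, identify it with $\PP[X \in \II(H)]/(1-p)^{\abs{V(H)}}$ for a Bernoulli$(p)$ random subset $X$, invoke the FKG/Harris inequality for decreasing events to bound $\PP[X \in \II(H)]$ below by $(1-p^2)^{\abs{E(H)}}$, and then pass to the exponential form via the elementary log estimates $-\ln(1-p)\geq p$ and $-\ln(1-p^2)\leq 2p^2$. The paper phrases the latter via power series for $1/(1-p)$ and $\ln(1-p^2)$, but that is notation rather than substance, and your remark about needing FKG for the non-uniform product measure matches the paper's own implicit use.
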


\begin{proof}
Consider the random subset $X \subset V(H)$ that arises from selecting every vertex of $H$ independently with probability $p$. We calculate the probability of $X$ being independent in two different ways. On the one hand, we have
\begin{align*}
	\PP[X \in \II(H)]
	= \sum_{A \in \II(H)} \PP[X = A]
	&= \sum_{A \in \II(H)} p^{\abs A} (1-p)^{\abs {V(H)} - \abs A} \\
	&= (1-p)^{\abs {V(H)}} \sum_{A \in \II(H)} \left( \frac p{1-p} \right)^{\abs A}
	\,.
\end{align*}
Observe that $p/(1-p) = \lambda$. On the other hand, we can also consider all the edges $uv \in E(H)$ and calculate the probability that for none of them, both endpoints belong to $X$. This yields
\begin{align*}
	\PP[X \in \II(H)]
	= \PP \left[ \bigcap_{uv \in E(H)} \{ u \in X \wedge v \in X \}^c \right]
	&\geq \prod_{uv \in E(H)} (1 - \PP[u \in X \wedge v \in X]) \\
	&= (1 - p^2)^{\abs {E(H)}}
	\,,
\end{align*}
where we have fixed an enumeration of $V(H) = \{ v_1, \ldots, v_k \}$ and identified $X \subset V(H)$ with $x \in \{ 0, 1 \}^k$ defined by $x_i = 1$ if $v_i \in X$, so we can repeatedly apply \cref{lem: FKG}. 
Note that the partial order $\leq$ on $\{ 0, 1 \}^k$ is just the subset relation and for all $E \subset E(H)$, the event $\bigcap_{uv \in E} \{ u \in X \wedge v \in X \}^c$ is decreasing.
Combining both observations proves the first statement claimed. 

In order to obtain the second inequality, we note that $p, p^2 \in (0, 1/2)$ since $0 < p < \lambda$. This allows us to use the geometric series to calculate
	\[
	\frac 1{1 - p}
	= \sum_{i = 0}^\infty p^i
	\geq \sum_{i = 0}^\infty \frac {p^i}{i!}
	= e^p
	\]
and also bound $\ln \left( (1 - p^2)^{\abs {E(H)}} \right) = \abs {E(H)} \ln (1 - p^2)$ from below using
	\[
	\ln (1 - p^2)
	= \sum_{k = 1}^\infty (-1)^{k+1} \frac {(-p^2)^k}k
	= -\sum_{k = 1}^\infty \frac {p^{2k}}k
	\geq -\sum_{k = 1}^\infty p^{2k}
	= - \frac {p^2}{1 - p^2}
	\geq -2p^2
	\,.
\]
This finishes the proof.
\end{proof}

We are now ready to prove \cref{thm: lowerboundKm}. Recall that we need to show that for $n, m \in \NN$ with $m$ odd and $\ell \coloneqq \lfloor m/2 \rfloor$, we have
\[
	\abs {\II(\ZZ_m^n)}
	\geq 2^{\ell m^{n-1}} \exp \left( (1 - o(1)) \left( \frac m2 \right)^{n-1} \right)
	\,.
\]

\begin{proof}[Proof of \cref{thm: lowerboundKm}]
Fix a maximum independent set $I \in \II^*(\ZZ_m^n)$ and let $H \subset \ZZ_m^n$ be the subgraph induced by the two adjacent partition classes in $\overline I$. We can obviously obtain a set of pairwise distinct independent sets in $\ZZ_m^n$ by considering all combinations $A \cup B$ of $A \in \II(H)$ and $B \subset I \setminus N_{\ZZ_m^n}(A, I)$.
As $\abs {N_{\ZZ_m^n}(A, I)} \leq \sum_{a \in A} \abs {N_{\ZZ_m^n}(a, I)} \abs A = n \abs A$, we obtain
\[
	\abs {\II(\ZZ_m^n)}
	\geq \sum_{A \in \II(H)} 2^{\abs {I \setminus N_{\ZZ_m^n}(A, I)}}
	\geq \sum_{A \in \II(H)} 2^{\ell m^{n-1} - n \abs A}
	= 2^{\ell m^{n-1}} \sum_{A \in \II(H)} (2^{-n})^{\abs A}
	\,.
\]
We now apply \cref{lem: estimate partition function} with $\lambda \coloneqq 2^{-n}$ and $p \coloneqq 2^{-n}/(1 + 2^{-n})$, which satisfies $(1 - o(1)) 2^{-n} \leq p \leq 2^{-n}$. We observe that $\abs {V(H)} = 2m^{n-1}$ and $\abs {E(H)} = nm^{n-1}$, so we obtain
\[
	\sum_{A \in \II(H)} (2^{-n})^{\abs A}
	\geq \exp \left( p \abs {V(H)} - 2p^2 \abs {E(H)} \right)
	= \exp \left( 2pm^{n-1} (1 - pn) \right)
	\,.
\]
The statement follows from $2pm^{n-1} \geq (1 - o(1))(m/2)^{n-1}$ and $pn \leq n2^{-n} \leq o(1)$.
\end{proof}

Note that in contrast to \cref{thm: lowerboundK3}, the lower bound in \cref{thm: lowerboundKm} does not contain a factor of $m \cdot 2^{n-1}$ representing the choice of $I \in \II(\ZZ_m^n)$ anymore. This is due to the fact that $m \cdot 2^{n-1} = o(\exp ((m/2)^n))$, so the lost factor is anyway smaller than the error allowed in \cref{thm: lowerboundKm}.

\section{Cluster Expansion}\label{sec:clusterexpansion}
One of the most powerful tools for counting independent sets is the cluster expansion from statistical physics. 
In particular, it is often able to yield much more detailed asymptotic formulas for the number of independent sets in a graph.
Since the method is quite well-established, we omit the technical details and instead focus on how to employ this method in the current setting.
We first introduce the polymer model $\PM(\ZZ_m^n, I)$ and establish its connection to $\abs {\II(\ZZ_m^n)}$, before we direct our attention to the clusters and actually calculate some initial terms of the cluster expansion.

\subsection{The polymer model}
Let $G$ be a graph and fix an independent set $I \in \II(G)$.
We say that a set $S \subset \overline I \coloneqq V(G) \setminus I$ is \emph{$(2, I)$-linked} if $G[S \cup N_G(S, I)]$ is connected.
We define the following \emph{polymer model} $\PM(G, I)$: 
A \emph{polymer} is a $(2, I)$-linked subset of $\overline I$ that is independent in $G$. 
Two polymers $S, T$ are \emph{compatible} if $S \cup T$ is independent in $G$, but not $(2, I)$-linked. 
We also write this as $S \sim T$.
The \emph{weight} of a polymer $S$ is defined as $w(S) \coloneqq 2^{-\abs {N_G(S, I)}}$.

For every polymer model $\PM$, one can consider the associated \emph{partition function} $Z_{\PM}
\coloneqq \sum_{\mathcal S} \prod_{S \in \mathcal S} w(S)$, where the sum is over all sets $\mathcal S$ of pairwise compatible polymers.
In the case of the polymer model $\PM = \PM(G, I)$ defined above, this partition function is actually counting independent sets in $G$.

\begin{lemma} \label{lem: indepsets polymer model}
Let $G$ be a graph and $I \in \II(G)$. Then $\abs {\II(G)} = 2^{\abs I} Z_{\PM(G, I)}$.
\end{lemma}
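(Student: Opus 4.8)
The plan is to exhibit an explicit bijection between $\II(G)$ and pairs $(J, \mathcal S)$, where $J \subseteq I$ and $\mathcal S$ is a set of pairwise compatible polymers in $\PM(G,I)$, such that the independent set corresponding to $(J,\mathcal S)$ has the right ``weight''. Given such a bijection, summing over all $J$ (which contributes the factor $2^{\abs I}$, once we check the weight bookkeeping) and over all compatible collections $\mathcal S$ (which is exactly $Z_{\PM(G,I)}$) yields the claimed identity.

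Concretely, I would start from an arbitrary $A \in \II(G)$ and split it as $A = A_I \cup A_{\overline I}$ with $A_I \coloneqq A \cap I$ and $A_{\overline I} \coloneqq A \cap \overline I$. The set $A_{\overline I}$ is independent in $G$; decompose $G[A_{\overline I} \cup N_G(A_{\overline I}, I)]$ into connected components, and let $\mathcal S = \{S_1, \ldots, S_r\}$ be the vertex sets of these components intersected with $\overline I$. Each $S_j$ is then $(2,I)$-linked and independent in $G$, i.e.\ a polymer, and distinct $S_j$'s are pairwise compatible (their union is independent since it sits inside $A_{\overline I}$, and it is not $(2,I)$-linked since they lie in different components). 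So $A_{\overline I} \mapsto \mathcal S$ is well-defined. For the $I$-side, the vertices of $I$ forbidden by $A_{\overline I}$ are exactly $N_G(A_{\overline I}, I) = \bigsqcup_j N_G(S_j, I)$ (a disjoint union, again because the components are disjoint), so $A_I$ can be any subset of $I \setminus N_G(A_{\overline I}, I)$; record $A_I$ by its indicator on all of $I$, noting that the coordinates outside $N_G(A_{\overline I},I)$ are free while those inside are forced to be $0$. This shows the map $A \mapsto (A_I, \mathcal S)$ is injective, and conversely every compatible collection $\mathcal S$ together with any subset of $I \setminus \bigcup_{S \in \mathcal S} N_G(S, I)$ reconstructs a unique independent set, so it is a bijection onto its natural target.

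It remains to count with the right weights. Fix a compatible collection $\mathcal S$. The number of independent sets $A$ with $A_{\overline I}$ giving rise to exactly $\mathcal S$ equals the number of choices for $A_I \subseteq I \setminus \bigcup_{S \in \mathcal S} N_G(S,I)$, which is $2^{\abs I - \abs{\bigcup_{S \in \mathcal S} N_G(S,I)}} = 2^{\abs I} \prod_{S \in \mathcal S} 2^{-\abs{N_G(S,I)}} = 2^{\abs I} \prod_{S \in \mathcal S} w(S)$, using once more that the $N_G(S,I)$ are pairwise disjoint for a compatible collection. Summing this over all compatible collections $\mathcal S$ (including the empty one, which contributes $2^{\abs I}$ and corresponds to $A \subseteq I$) gives
\[
\abs{\II(G)} = \sum_{\mathcal S} 2^{\abs I} \prod_{S \in \mathcal S} w(S) = 2^{\abs I} Z_{\PM(G,I)},
\]
as desired.

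The step I expect to require the most care is the disjointness claim: that for a collection of pairwise compatible polymers $\mathcal S$, the sets $N_G(S, I)$ for $S \in \mathcal S$ are pairwise disjoint, and moreover that the union of the polymers together with this neighbourhood decomposes into exactly the connected components one started from (so that the forward and backward maps are genuinely inverse). If two compatible polymers $S, T$ shared a neighbour $x \in I$, then $S \cup T \cup \{x\}$ — hence $S \cup T$ together with $N_G(S\cup T, I)$ — would be connected, contradicting that $S \cup T$ is not $(2,I)$-linked; this is the crux and should be isolated as the key observation. Everything else is routine bookkeeping with indicator vectors and finite sums.
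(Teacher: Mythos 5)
Your proof is correct and takes essentially the same route as the paper: split $A$ into $A \cap I$ and $A \cap \overline I$, map $A \cap \overline I$ to the connected components of $G[(A \cap \overline I) \cup N_G(A \cap \overline I, I)]$ to obtain the compatible collection $\mathcal S$, and use the resulting disjointness of the sets $N_G(S,I)$ to turn $2^{-\abs{N_G(A \cap \overline I, I)}}$ into $\prod_{S \in \mathcal S} w(S)$. One small slip worth fixing: in your key observation, $G[S \cup T \cup \{x\}]$ is \emph{not} connected (since $S \cup T$ is independent and hence edgeless); the correct statement, which you do give right after the dash, is that $G[(S \cup T) \cup N_G(S \cup T, I)]$ is connected because it is the union of the two connected graphs $G[S \cup N_G(S,I)]$ and $G[T \cup N_G(T,I)]$ sharing the vertex $x$.
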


\begin{proof}
For every independent set in $G$, there is exactly one way to write it as the union $A \cup B$ of $A \in \II(G[\overline I])$ and $B \subset I \setminus N_G(A, I)$.
This yields
\[
	\abs {\II(G)}
	= \sum_{A \in \II(G[\overline I])} 2^{\abs I - \abs {N_G(A, I)}}
	= 2^{\abs I} \sum_{A \in \II(G[\overline I])} 2^{- \abs {N_G(A, I)}}
	\,.
\]

Our next goal is to show that there is a one-to-one correspondence between independent sets $A \in \II(G[\overline I])$ and sets $\mathcal S$ of pairwise compatible polymers, which enables us to sum over all such sets $\mathcal S$ instead. 
If $G$ is bipartite and $I, \overline I$ are its partition classes (as is usually the case in the literature on $\ZZ_m^n$ with even $m$), this is straightforward. 
Meanwhile, our setting allows edges inside of $\overline I$ and thus requires the more delicate polymer definition above.
Therefore, a formal proof of the desired correspondence seems warranted to us.

To this end, consider the following two functions that map independent sets $A \in \II(G[\overline I])$ to sets $\mathcal S$ of pairwise compatible polymers and vice-versa: 
Given an independent set $A \in \II(G[\overline I])$, decompose the graph $G[A \cup N_G(A, I)]$ into its $\ell \geq 0$ connected components $C_1, \ldots, C_\ell$ and define $\Phi(A) \coloneqq \{ V(C_i) \cap \overline I \mid i \in [\ell] \}$.
Given a set $\mathcal S$ of pairwise compatible polymers, define $\Psi(\mathcal S) \coloneqq \bigcup_{S \in \mathcal S} S$.

\begin{claim}\label{claim: indepsets<->polymers}
The functions $\Phi$ and $\Psi$ are well-defined and the inverse of each other.
\end{claim}

\begin{proofclaim}
Let $A \in \II(G[\overline I])$ be arbitrary and $C_1, \ldots, C_\ell$ be the connected components of $G[A \cup N_G(A, I)]$. Since $(A \cup N_G(A, I)) \cap \overline I = A$, we find that the $S_i \coloneqq V(C_i) \cap \overline I$ partition $A$. On the one hand, this immediately proves that $\Psi (\Phi(A)) = A$. On the other hand, this also guarantees that the $S_i$ inherit from $A$ that they are subsets of $\overline I$ that are independent in $G$. By construction, each $G[S_i \cup N_G(S_i, I)] = C_i$ is connected, so the $S_i$ are indeed $(2, I)$-linked and thus polymers. 

In order to see that the $S_i$ are also pairwise compatible, note that $S_i \cup S_j \subset A$ must be independent in $G$ and thus assume that it is still $(2, I)$-linked. This means that $G[S_i \cup N_G(S_i, I) \cup S_j \cup N_G(S_j)]$ is connected and thus belongs to the same connected component $C_i = C_j$ of $G[A \cup N_G(A, I)]$. As desired, $i = j$ follows. This shows that $\Phi$ is indeed well-defined.

For the inverse direction, let $\mathcal S$ be a set of pairwise compatible polymers. Then each $S \in \mathcal S$ is a subset of $\overline I$ that is independent in $G$. Their union must therefore also be a subset of $\overline I$. If it were not independent, there would be $u, v$ in distinct $S, T \in \mathcal S$ with $uv \in E(G)$. However, then $S \cup T$ would not be independent and $S, T$ would therefore not be compatible, a contradiction. This proves that $\Psi$ is well-defined.
	
In order to see that $\Phi(\Psi(\mathcal S)) = \mathcal S$, first note that by compatibility, both $\Phi(\Psi(\mathcal S))$ and $\mathcal S$ partition $A \coloneqq \Psi(\Phi(\Psi(\mathcal S))) = \Psi(\mathcal S)$. It therefore suffices to show that every $S \in \mathcal S$ is a subset of some $S' \in \Phi(A)$ and vice-versa: 
	\begin{itemize}
		\item Let $S \in \mathcal S$ be arbitrary. By its $(2, I)$-linkedness, $G[S \cup N_G(S, I)]$ is a connected subgraph of $G[A \cup N_G(A, I)]$ and must therefore belong to a single connected component of $G[A \cup N_G(A, I)]$. This shows that the intersection $(S \cup N_G(S, I)) \cap \overline I = S$ is contained in some element of $\Phi(A)$. 
		\item On the other hand, consider some connected component $C$ of $G[A \cup N_G(A, I)]$ and suppose $C \cap \overline I$ intersects with multiple polymers in $\mathcal S$. Let $S, T \in \mathcal S$ be two such polymers, which by connectedness of $C$, can be chosen such that $S \cup N_G(S, I)$ and $T \cup N_G(T, I)$ are adjacent in $G$. As $I$ is independent, any connecting edge is either between $S$ and $T$ or establishes an intersection of $N_G(S, I)$ and $N_G(T, I)$. Either way, it contradicts $S$ and $T$ being compatible.
	\end{itemize}
This shows that $\Phi(\Psi(\mathcal S)) = \mathcal S$ and thus finishes the proof of the claim.
\end{proofclaim}

The $(2, I)$-linkedness of every $S \in \Phi(A)$ now guarantees that the $N_G(S, I)$ partition $N_G(A, I)$. This allows us to write
\[
	2^{- \abs {N_G(A, I)}}
	= 2^{- \sum_{S \in \Phi(A)} \abs {N_G(S, I)}}
	= \prod_{S \in \Phi(A)} 2^{- \abs {N_G(S, I)}}
	= \prod_{S \in \Phi(A)} w(S)
	\,.
\]
Having established in \cref{claim: indepsets<->polymers} that $\Phi$ bijectively maps independent sets $A \in \II(G[\overline I])$ to sets $\mathcal S$ of pairwise compatible polymers, we conclude that
\[
	\sum_{A \in \II(G[\overline I])} 2^{- \abs {N_G(A, I)}}
	= \sum_{A \in \II(G[\overline I])} \prod_{S \in \Phi(A)} w(S)
	= \sum_{\mathcal S} \prod_{S \in \mathcal S} w(S)
	= Z_{\PM(G, I)}
	\,,
\]
which finishes the proof.
\end{proof}

\subsection{Cluster expansion}
Let $\PM$ be a polymer model with compatibility relation~$\sim$. A \emph{cluster} of $\PM$ is a vector $\Gamma = (S_1, S_2, \ldots, S_k)$ of $k \geq 1$ (not necessarily distinct) polymers of $\PM$ such that the corresponding \emph{incompatibility graph} $H_\Gamma = ([k], \{ ij \colon S_i \not \sim S_j \})$ is connected. The \emph{size} of a cluster $\Gamma = (S_1, S_2, \ldots, S_k)$ is defined as $\Abs \Gamma \coloneqq \sum_{i = 1}^k \abs {S_i}$. Let $\CM(\PM)$ denote the (infinite) set of all clusters and write $\CM_r(\PM) \coloneqq \{ \Gamma \in \CM(\PM) \colon \Abs \Gamma = r \}$. We now define the \emph{Ursell function} of a graph $H$ as
\[
	\phi(H)
	\coloneqq \frac 1{\abs {V(H)}!} \sum_{\substack{\text{spanning, connected}\\\text{subgraphs $F \subset H$}}} (-1)^{\abs {E(F)}}
	\,.
\]
With this, we can write the logarithm of the partition function $Z_{\PM}$ as the following formal power series, which is also known as the \emph{cluster expansion} of $\PM$:
\[
	\log Z_{\PM}
	= \sum_{r = 1}^\infty L_r(\PM)
	\qquad \text{with} \qquad
	L_r(\PM) \coloneqq \sum_{\substack{\Gamma \in \CM_r(\PM) \\ \Gamma = (S_1, \ldots, S_k)}} \phi(H_\Gamma) \prod_{i = 1}^k w(S_i)
	\,.
\]
We apply this to $\PM = \PM(\ZZ_m^n, I)$, choosing an arbitrary maximum independent set $I \in \II^*(\ZZ_m^n)$. This requires us to figure out the different types of clusters that exist in this model as well as calculate their contribution to the cluster expansion. For small $r$, this is straightforward enough to do.

\begin{theorem}\label{thm: L1L2}
For $n, m \in \NN$ with $m \geq 5$ odd and $I \in \II^*(\ZZ_m^n)$, we have
\begin{align*}
	L_1(\PM(\ZZ_m^n, I)) = m^{n-1} \bigg( & 2 \cdot 2^{-n} + \frac {m-3}2 \cdot 2^{-2n} \bigg) \\
	L_2(\PM(\ZZ_m^n, I)) = m^{n-1} \bigg( &\big( n^2 - 2n - 1 \big) \cdot 2^{-2n} + \big( 3n^2 - n \big) \cdot 2^{-3n} \\
	&{} + \bigg( \frac {3m - 12}2 n^2 + \frac{7 - 2m}2 n - \frac {m-3}4 \bigg) \cdot 2^{-4n} \bigg)
	\,.
\end{align*}
\end{theorem}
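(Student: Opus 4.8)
The plan is to compute $L_1$ and $L_2$ straight from the definitions of the weights, the Ursell function $\phi$, and the cluster expansion, reducing everything to a census of polymers of size $1$ and $2$. Since $L_r(\PM(G,I))$ depends on $(G,I)$ only up to a graph automorphism of $G$, and since \cref{lem: generate maxindepsets,lem: no further maxindepsets} identify every $I \in \II^*(\ZZ_m^n)$ as some $\iota_n(q,\epsilon)$, I would first apply the coordinate-sign automorphism flipping each coordinate $i+1$ with $\epsilon_i = -1$, followed by a translation, to reduce to the normal form where $I$ consists of those $v$ with $f(v) \coloneqq v_1 + \cdots + v_n \in \{0, 2, \ldots, m-3\} \bmod m$. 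In this form the level sets of $f$ partition $\ZZ_m^n$ into $m$ independent classes of size $m^{n-1}$, two classes spanning edges exactly when their labels differ by $1 \bmod m$; hence $\overline I$ is the union of $\ell - 1$ \emph{isolated} classes (labels $1, 3, \ldots, m-4$, each sandwiched between two classes of $I$) and two mutually adjacent \emph{boundary} classes (labels $m-2$ and $m-1$). A vertex in an isolated class has all $2n$ of its neighbours in $I$, hence weight $2^{-2n}$; a vertex in a boundary class has exactly $n$ neighbours in $I$, hence weight $2^{-n}$. The clusters $\Gamma$ with $\Abs \Gamma = 1$ are precisely the single-vertex polymers, with incompatibility graph $K_1$ and $\phi(K_1) = 1$, so $L_1 = \sum_{v \in \overline I} w(\{v\}) = m^{n-1}\bigl(2\cdot 2^{-n} + \tfrac{m-3}2\cdot 2^{-2n}\bigr)$, using $\ell - 1 = \tfrac{m-3}2$.

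For $L_2$ I would observe that the clusters $\Gamma$ with $\Abs \Gamma = 2$ are of exactly two kinds: a single polymer $S$ with $\abs S = 2$ (incompatibility graph $K_1$, $\phi = 1$), or an ordered pair $(\{u\}, \{v\})$ of incompatible single-vertex polymers (incompatibility graph $K_2$, $\phi(K_2) = -1/2$). Unwinding compatibility, $\{u\} \not\sim \{v\}$ holds precisely when $u = v$, or $uv \in E(\ZZ_m^n)$, or $\{u,v\}$ is itself a $2$-vertex polymer; and $\{u,v\}$ is a $2$-vertex polymer exactly when $u \neq v$, $uv \notin E(\ZZ_m^n)$, and $u$ and $v$ share a neighbour in $I$ (which for a two-element set is what $(2,I)$-linkedness amounts to). Splitting the double sum over incompatible singleton pairs along these three cases, recombining the last case with the single-size-$2$-polymer term, and using $w(\{u,v\}) = w(\{u\})\,w(\{v\})\,2^{t(u,v)}$ with $t(u,v) \coloneqq \abs{N_{\ZZ_m^n}(u,I) \cap N_{\ZZ_m^n}(v,I)} \in \{1,2\}$, I arrive at
\[
	L_2 = \sum_{\{u,v\}} w(\{u\})\,w(\{v\})\bigl(2^{t(u,v)} - 1\bigr) - \tfrac12 \sum_{v \in \overline I} w(\{v\})^2 - \sum_{uv \in E(\ZZ_m^n[\overline I])} w(\{u\})\,w(\{v\}),
\]
the first sum running over unordered $2$-vertex polymers $\{u,v\}$.

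It then remains to evaluate these three sums. The second is read off from the weights above. For the third, the only edges inside $\overline I$ join the two boundary classes, and each of the $m^{n-1}$ vertices of class $m-2$ has exactly $n$ neighbours in class $m-1$, giving $n\,m^{n-1}$ edges of weight $2^{-2n}$ each. The first sum carries the bulk of the work: because $u, v \in \overline I$ share a neighbour in $I$, the difference $u - v$ is, up to sign and a permutation of coordinates, of shape $e_i - e_j$ or $e_i + e_j$ (with $i \neq j$) or $2e_i$ — here $m \geq 5$ guarantees $2e_i$ is not itself an edge — so that $f(u) - f(v) \equiv 0$ (for $e_i - e_j$) or $\equiv \pm 2 \pmod m$ (for the other two shapes); the value $t(u,v)$ and the weights $w(\{u\}), w(\{v\})$ are then determined by the shape together with the isolated/boundary types of the two classes involved. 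Running through the resulting finite list — two distinct vertices of one isolated class, or of two isolated classes whose labels differ by $2$ (both contributing at order $2^{-4n}$); one isolated and one boundary vertex whose classes differ by $2$ (order $2^{-3n}$); and two boundary vertices, necessarily distinct vertices of the \emph{same} boundary class (order $2^{-2n}$) — I would count each family (the index choices $i,j$ producing factors such as $n$ and $\tfrac{n(n-1)}2$) and weight by the matching $2^{t(u,v)} - 1 \in \{1,3\}$. Summing everything and collecting powers of $2^{-n}$ gives the stated formula: for instance the coefficient of $2^{-2n}$ comes out as $n(n-1) - n - 1 = n^2 - 2n - 1$ (from the two same-boundary-class polymer families, the $\overline I$-edges, and $-\tfrac12\sum_v w(\{v\})^2$), and that of $2^{-3n}$ as $3n(n-1) + 2n = 3n^2 - n$ (from the two isolated–boundary families, each giving $\tfrac{3n(n-1)}2$ polymers with $t = 2$ and $n$ with $t = 1$).

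The main obstacle is precisely this bookkeeping. One must check, uniformly in odd $m \geq 5$, which pairs of classes of $\overline I$ lie at cyclic distance $2$ — in particular the wrap-around pair with labels $m-1$ and $1$, and the degeneration at $m = 5$, where the isolated classes collapse to the single class with label $1$ — make sure that no difference-vector shape is missed and that $2e_i$ is kept apart from $-2e_i$ and $e_i + e_j$ from $e_i - e_j$, and count the coordinate choices so as to reproduce the polynomial-in-$n$ coefficients exactly. Once the class taxonomy of $\overline I$ and the list of $2$-vertex polymers are fixed, the remaining arithmetic is routine.
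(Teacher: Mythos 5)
Your proof is correct and follows essentially the same route as the paper: reduce by an automorphism to a normal form for $I$, classify the classes of $\overline I$ into two boundary and $\tfrac{m-3}2$ interior classes, and then enumerate the size-$1$ and size-$2$ clusters by the shape of $u-v$ and the class types of $u,v$. The closed formula $L_2 = \sum_{\{u,v\}} w(\{u\})w(\{v\})\bigl(2^{t(u,v)}-1\bigr) - \tfrac12\sum_{v} w(\{v\})^2 - \sum_{uv\in E(\ZZ_m^n[\overline I])} w(\{u\})w(\{v\})$ is a tidy algebraic repackaging of the paper's nine ordered-pair types and six two-vertex-polymer types, and the coefficient checks you give at $2^{-2n}$ and $2^{-3n}$ (and the implicit $2^{-4n}$ census) agree with the paper's totals.
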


\begin{proof}
By symmetry, it suffices to consider $I = \iota_n(1, 1, \ldots, 1)$. Let $\PM \coloneqq \PM(\ZZ_m^n, I)$ be the corresponding polymer model and write $\EE(p) \coloneqq \{ v \in V(\ZZ_m^n) \colon \sum_{i = 1}^n v_i = p \mod m \}$ for $p \in \ZZ_m$. This means that $I = \EE(1) \cup \EE(3) \cup \ldots \cup \EE(m-2)$, while $\overline I = \EE(0) \cup \EE(2) \cup \ldots \cup \EE(m-1)$. Note that vertices in $X \coloneqq \EE(0) \cup \EE(m-1)$ have $n$ neighbours in $I$, whereas vertices in $Y \coloneqq \overline I \setminus X$ have $2n$ neighbours in $I$. 

For size $1$, each cluster $\Gamma = (\{ v \}) \in \CM_1(\PM)$ consists of a one single-vertex polymer and has $\phi(H_\Gamma) = 1$. Consequently, there are $2 \cdot m^{n-1}$ clusters with $v \in X$ and weight $1 \cdot 2^{-n}$ as well as $\frac {m-3}2 \cdot m^{n-1}$ clusters with $v \in Y$ and weight $1 \cdot 2^{-2n}$.

For size $2$, there are nine different types of clusters $\Gamma = (\{ v \}, \{ w \}) \in \CM_2(\PM)$ that consist of two single-vertex polymers. Their incompatibility graph $H_\Gamma$ is a single edge, so $\phi(H_\Gamma) = -\frac 12$ for all of them.
\begin{enumerate}
	\item $v \in X$ and $w = v$: $2 \cdot m^{n-1}$ clusters of weight $-\frac 12 \cdot 2^{-2n}$.
	\item $v \in Y$ and $w = v$: $\frac {m-3}2 \cdot m^{n-1}$ clusters of weight $-\frac 12 \cdot 2^{-4n}$.
	\item $v \in \EE(0)$ and $w = v - e_i$ or $v \in \EE(m-1)$ and $w = v + e_i$ for $i \in [n]$: $2 \cdot m^{n-1} \cdot n$ clusters of weight $-\frac 12 \cdot 2^{-2n}$.
	\item $v \in X$ and $w = v + e_i - e_j$ for $i \neq j \in [n]$: $2 \cdot m^{n-1} \cdot n(n-1)$ clusters of weight $-\frac 12 \cdot 2^{-2n}$.
	\item $v \in Y$ and $w = v + e_i - e_j$ for $i \neq j \in [n]$: $\frac {m-3}2 \cdot m^{n-1} \cdot n(n-1)$ clusters of weight $-\frac 12 \cdot 2^{-4n}$.
	\item $v \in \EE(0) \cup \EE(m-3)$ and $w = v + 2e_i$ or $v \in \EE(2) \cup \EE(m-1)$ and $w = v - 2e_i$ for $i \in [n]$: $4 \cdot m^{n-1} \cdot n$ clusters of weight $-\frac 12 \cdot 2^{-3n}$.
	\item $v \in \EE(2) \cup \ldots \cup \EE(m-5)$ and $w = v + 2e_i$ or $v \in \EE(4) \cup \ldots \cup \EE(m-3)$ and $w = v - 2e_i$ for $i \in [n]$: $(m - 5) \cdot m^{n-1} \cdot n$ clusters of weight $-\frac 12 \cdot 2^{-4n}$.
	\item $v \in \EE(0) \cup \EE(m-3)$ and $w = v + e_i + e_j$ or $v \in \EE(2) \cup \EE(m-1)$ and $w = v - e_i - e_j$ for $i \neq j \in [n]$: $4 \cdot m^{n-1} \cdot \frac {n(n-1)}2$ clusters of weight $-\frac 12 \cdot 2^{-3n}$.
	\item $v \in \EE(2) \cup \ldots \cup \EE(m-5)$ and $w = v + e_i + e_j$ or $v \in \EE(4) \cup \ldots \cup \EE(m-3)$ and $w = v - e_i - e_j$ for $i \neq j \in [n]$: $(m - 5) \cdot m^{n-1} \cdot \frac {n(n-1)}2$ clusters of weight $-\frac 12 \cdot 2^{-4n}$.
\end{enumerate}

The remaining clusters $\Gamma = (\{ v, w \}) \in \CM_2(\PM)$ consist of a single two-vertex polymer and thus satisfy $\phi(H_\Gamma) = 1$. 
In fact, all such polymers $\{ v, w \}$ correspond to two clusters $(\{ v \}, \{ w \})$ and $(\{ w \}, \{ v \})$ of the same type in the list above. 
It is easy to see that for clusters $(\{ v \}, \{ w \})$ of type (1), (2), and (3), the union of their vertices is not a two-vertex polymer in $\PM$. 
For all the other types, however, $(\{ v, w \})$ is indeed a valid cluster in $\CM_2(\PM)$. 
We thus obtain the number of these clusters by dividing the number above by $2$.
The weights can be calculated by replacing $-\frac 12$ by $1$ and adding a factor of $2$ for every shared neighbour of $v$ and $w$ in $I$.
\begin{enumerate}[start = 4]
	\item $v$ and $w$ share $v + e_i$ if $v \in \EE(0)$ or $v - e_j$ if $v \in \EE(m-1)$: $m^{n-1} \cdot n(n-1)$ clusters of weight $2 \cdot 2^{-2n}$.
	\item $v$ and $w$ share $v + e_i$ and $v - e_j$: $\frac {m-3}4 \cdot m^{n-1} \cdot n(n-1)$ clusters of weight $4 \cdot 2^{-4n}$.
	\item $v$ and $w$ share $v + e_i$ if $v \in \EE(0) \cup \EE(m-3)$ or $v - e_i$ if $v \in \EE(2) \cup \EE(m-1)$: $2 \cdot m^{n-1} \cdot n$ clusters of weight $2 \cdot 2^{-3n}$.
	\item $v$ and $w$ share $v + e_i$ if $v \in \EE(2) \cup \ldots \cup \EE(m-5)$ or $v - e_i$ if $v \in \EE(4) \cup \ldots \cup \EE(m-3)$: $\frac {m-5}2 \cdot m^{n-1} \cdot n$ clusters of weight $2 \cdot 2^{-4n}$.
	\item $v$ and $w$ share $v + e_i$ and $v + e_j$ if $v \in \EE(0) \cup \EE(m-3)$ or $v - e_i$ and $v - e_j$ if $v \in \EE(2) \cup \EE(m-1)$: $2 \cdot m^{n-1} \cdot \frac {n(n-1)}2$ clusters of weight $4 \cdot 2^{-3n}$.
	\item $v$ and $w$ share $v + e_i$ and $v + e_j$ if $v \in \EE(2) \cup \ldots \cup \EE(m-5)$ or $v - e_i$ and $v - e_j$ if $v \in \EE(4) \cup \ldots \cup \EE(m-3)$: $\frac {m-5}2 \cdot m^{n-1} \cdot \frac {n(n-1)}2$ clusters of weight $4\cdot 2^{-4n}$.
\end{enumerate}

Grouping by exponent of the $2^{-n}$-term and simplifying yields both formulas.
\end{proof}

Note that every vertex in $\overline I$ has either $n$ or $2n$ edges to $I$, so every polymer $S$ has weight $w(S) \leq 2^{- \abs S n}$ and every cluster $\Gamma \in \CM_r(\PM)$ contributes at most $O(2^{-rn})$ to $\log Z_\PM$. 
For small $r$, the number of clusters in $\CM_r(\PM)$ is obviously bounded by $O(p_r(n) \cdot m^n)$ for some polynomial $p_r$, so the terms $L_r(\PM(\ZZ_m^n, I))$ with $m/2^r < 1$ become negligible as $n \to \infty$. 
In order to establish convergence of the cluster expansion, however, one has to make the same argument for arbitrarily large $r$.
Since we are unable to achieve this, the approach above only yields a conjecture on which clusters are relevant, but falls short of a proof.

In the case of $m \in \{ 5, 7 \}$, we have $m/2^3 < 1$, so only clusters of size at most $2$ should be relevant. 
It is also sensible to assume that starting with distinct maximum independent sets $I, I' \in \II^*(\ZZ_m^n)$ will again result in $A \cup B \neq A' \cup B'$ for all but a negligible fraction of combinations of $A \in \II(G[\overline I])$ and $B \in I \setminus N_G(A, I)$ as well as $A' \in \II(G[\overline {I'}])$ and $B' \in I' \setminus N_G(A', I')$.
Therefore, the calculation in \cref{thm: L1L2} together with \cref{lem: no further maxindepsets,lem: indepsets polymer model} naturally leads to the following conjecture.

\begin{conjecture}\label{conj: pfct57}
\begin{align*}
	\abs {\II(\ZZ_5^n)}
	&= (1 \pm o(1)) \cdot 5 \cdot 2^{n-1} \cdot 2^{2 \cdot 5^{n-1}} \cdot \exp \left( \left( \frac 52 \right)^{n-1} + \frac {n^2 - 2n}4 \left( \frac 54 \right)^{n-1} \right) \\
	\abs {\II(\ZZ_7^n)}
	&= (1 \pm o(1)) \cdot 7 \cdot 2^{n-1} \cdot 2^{3 \cdot 7^{n-1}} \cdot \exp \left( \left( \frac 72 \right)^{n-1} + \frac {n^2 - 2n + 1}4 \left( \frac 74 \right)^{n-1} \right)
	\,.
\end{align*}
\end{conjecture}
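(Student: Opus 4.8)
The plan is to enumerate by hand all clusters of $\PM \coloneqq \PM(\ZZ_m^n, I)$ of size $1$ and of size $2$, record for each its Ursell factor $\phi(H_\Gamma)$ and its weight, and sum. Every maximum independent set is the image of $\iota_n(1, \ldots, 1) = \EE(1) \cup \EE(3) \cup \cdots \cup \EE(m-2)$ under an automorphism of $\ZZ_m^n$ (negate the coordinates $j+1$ with $\epsilon_j = -1$, then translate the first coordinate), and automorphisms carry $\PM(\ZZ_m^n, I)$ to $\PM(\ZZ_m^n, I')$, so by \cref{lem: no further maxindepsets} we may assume $I = \iota_n(1, \ldots, 1)$; then $\overline I = \EE(0) \cup \EE(2) \cup \cdots \cup \EE(m-1)$. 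The one structural fact used throughout is that every neighbour of a vertex in $\EE(p)$ lies in $\EE(p-1) \cup \EE(p+1)$, with exactly $n$ neighbours in each class; hence a vertex of $X \coloneqq \EE(0) \cup \EE(m-1)$ has exactly $n$ neighbours in $I$ and, as a singleton polymer, weight $2^{-n}$, while a vertex of $Y \coloneqq \overline I \setminus X = \EE(2) \cup \cdots \cup \EE(m-3)$ has $2n$ neighbours in $I$ and weight $2^{-2n}$. A cluster of size $1$ is a single such singleton polymer with $\phi = 1$, so the formula for $L_1$ is immediate from $\abs{X} = 2m^{n-1}$ and $\abs{Y} = \tfrac{m-3}{2}\, m^{n-1}$.

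For $L_2$ I would split the clusters of size $2$ into those of the form $(\{v\}, \{w\})$ with two singleton polymers, whose incompatibility graph is a single edge and hence has $\phi = \phi(K_2) = -\tfrac12$, and those of the form $(\{v, w\})$ with a single two-vertex polymer, which have $\phi = 1$. Two singletons are incompatible exactly when $\{v, w\}$ fails to be at once independent and non-$(2, I)$-linked, that is, when $v = w$, or $vw \in E(\ZZ_m^n)$, or $v \neq w$ are non-adjacent with a common neighbour in $I$; in the last case $w = v \pm e_i \pm e_j$ with $i$ and $j$ possibly equal, a common neighbour in $I$ being one of the intermediate vertices. Whether such an intermediate vertex really lies in $I$, and whether the resulting $w$ lies in $\overline I$, is decided entirely by the residue class $p$ of $v$ (bearing in mind that $\EE(m-1)$, like $\EE(0)$, sits in $X$), and refining these three cases by $p$ produces exactly the nine types in the statement. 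In each type the weight of $(\{v\}, \{w\})$ equals $-\tfrac12 \cdot 2^{-\abs{N(v, I)} - \abs{N(w, I)}}$, fixed by which of $X$, $Y$ contains $v$ and $w$, and the number of ordered pairs is $\abs{\EE(p)} = m^{n-1}$ times the number of admissible index choices $i$ or $(i, j)$.

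The two-vertex polymers are treated in parallel: a two-element set $\{v, w\} \subset \overline I$ is a polymer exactly when it is independent and $(2, I)$-linked, and since two vertices can be joined through $I$ only by a common neighbour, this is precisely the condition that $v$ and $w$ be non-adjacent with a common neighbour in $I$ --- that is, the pairs already arising in types (4)--(9). Each such unordered pair gives one cluster $(\{v, w\})$, so its multiplicity is half that of the corresponding ordered singleton type, while its weight is $2^{-\abs{N(\{v, w\}, I)}}$ with $\abs{N(\{v, w\}, I)} = \abs{N(v, I)} + \abs{N(w, I)} - s$, where $s \in \{1, 2\}$ is the number of shared neighbours in $I$, again read off from the residue of $v$. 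Assembling all the contributions, grouping them by the power of $2^{-n}$, and simplifying the resulting polynomials in $n$ yields the two stated formulas.

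I do not expect a conceptual obstacle, only careful bookkeeping, and the points needing genuine care are three. First, the wrap-around class $\EE(m-1)$ behaves like $\EE(0)$ and not like the other even classes, so its vertices have only $n$ neighbours in $I$. Second, the residue of $v$ simultaneously governs whether $w$ lands back in $\overline I$ and the number $s$ of shared $I$-neighbours, so one and the same geometric move --- say $w = v + 2e_i$, or $w = v + e_i - e_j$ --- contributes different weights into different types depending on where $v$ sits; this is why a single configuration can appear in several rows of the table. Third, completeness of the size-$2$ classification: the one fact worth stating explicitly here is that a two-element vertex set can be $(2, I)$-linked only through a single common neighbour in $I$, which pins down the list. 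Once the table is correct the remaining arithmetic is routine.
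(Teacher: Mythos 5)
Your plan for enumerating the clusters of size $1$ and $2$ is essentially identical to the paper's proof of \cref{thm: L1L2}: the same split of singleton polymers into $X = \EE(0) \cup \EE(m-1)$ (weight $2^{-n}$) versus $Y = \overline{I} \setminus X$ (weight $2^{-2n}$), the same trichotomy for incompatibility of two singletons ($v = w$, adjacency, or a shared $I$-neighbour), the same observation that two-vertex polymers can only be $(2,I)$-linked through a common $I$-neighbour, and the same bookkeeping of the shared-neighbour count $s$ in the weight. Your symmetry reduction to $I = \iota_n(1, \ldots, 1)$ is also the paper's. That part is sound.

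However, the statement you are trying to establish is the \emph{conjecture}, not \cref{thm: L1L2}, and your proposal stops exactly where the paper's proof stops and its conjecture begins. Computing $L_1$ and $L_2$ does not yield the stated formulas for $\abs{\II(\ZZ_5^n)}$ and $\abs{\II(\ZZ_7^n)}$; to get from there to the conjecture one needs, in addition: (i) the identity $\abs{\II(G)} = 2^{\abs I} Z_{\PM(G, I)}$ from \cref{lem: indepsets polymer model} together with $\abs I = \ell m^{n-1}$; (ii) the assumption that the formal cluster expansion $\log Z_{\PM} = \sum_r L_r$ actually converges and that the tail $\sum_{r \geq 3} L_r$ is $o(1)$ — this is precisely the obstruction the paper names explicitly when it says that without a suitable isoperimetric inequality, the approach ``only yields a conjecture ... but falls short of a proof''; and (iii) the heuristic that the $m \cdot 2^{n-1}$ choices of $I \in \II^*(\ZZ_m^n)$ yield pairwise essentially disjoint families of independent sets, which is what produces the leading factor $m \cdot 2^{n-1}$ in the conjectured formulas. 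None of (i)--(iii) appears in your proposal, and (ii) in particular is a genuine conceptual obstacle, not bookkeeping, contradicting your closing remark. Finally, you also do not carry out the specialization to $m = 5$ and $m = 7$: one must check that for these $m$ the terms of order $2^{-3n}$ and $2^{-4n}$ in $L_2$ are indeed $o(1)$ (since $m < 8$), and combine the $2^{-2n}$-term from $L_1$ (which contributes $\tfrac{m-3}{8}(m/4)^{n-1}$) with that from $L_2$ to obtain the coefficients $\tfrac{n^2-2n}{4}$ and $\tfrac{n^2-2n+1}{4}$ appearing in the statement.
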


\section{Isoperimetric inequalities}\label{sec:isoperimetry}
The reasoning behind the proof of the upper bound in \cite{Gal19} is actually quite similar to the initial argument of \cref{lem: indepsets polymer model}: 
Having fixed a (maximum) independent set $I \in \II^*(\ZZ_m^n)$, every independent set in $\ZZ_m^n$ can be partitioned into its intersections $A, B$ with $\overline I$ and $I$, respectively.
Since both inherit independence in $\ZZ_m^n$, the former is a set $A \in \II(\ZZ_m^n[\overline I])$. 
By independence of $I$, any $B \subset I$ is automatically independent, but once $A$ is known, $B$ must also satisfy $B \subset I \setminus N_{\ZZ_m^n}(A, I)$, leaving exactly $2^{\abs I - \abs {N_{\ZZ_m^n}(A, I)}}$ choices.
For $m \geq 3$ odd and $\ell \coloneqq \lfloor m/2 \rfloor$, we have already established in \cref{sec:lower-bounds} that all $I \in \II^*(\ZZ_m^n)$ have the same structure, so it suffices to only look at one representative. 
With the notation of \cref{lem: generate maxindepsets}, we choose $I_0 \coloneqq \iota_n(0, 1, \ldots, 1)$.
Also using $\abs {\II^*(\ZZ_m^n)} = m \cdot 2^{n-1}$ and $\abs I = \ell m^{n-1}$ from \cref{sec:lower-bounds}, we therefore immediately observe that
\[
	\abs {\II(\ZZ_m^n)}
	\leq \sum_{I \in \II^*(\ZZ_m^n)} \sum_{A \in \II(\ZZ_m^n[\overline I])} 2^{\abs I - \abs {N_{\ZZ_m^n}(A, I)}}
	\leq m \cdot 2^{n-1} \cdot 2^{\ell m^{n-1}} \cdot \sum_{A \in \II(\ZZ_m^n[\overline {I_0}])} 2^{-\abs {N_{\ZZ_m^n}(A, I_0)}}
	\,.
\]

It remains to bound the sum on the right from above. 
For this, we require a lower bound on the neighbourhood size of certain independent sets $A$. 
At its core, this is asking for an isoperimetric inequality in the graph $\ZZ_m^n$, that is some lower bound on $\abs {N_{\ZZ_m^n}(A)}$ in terms of $\abs A$. 
Fixing all but one coordinate, $\abs {N_{\ZZ_m^n}(A)} \geq \abs A$ is trivial to obtain. 
In order to make progress towards an upper bound, however, guaranteeing that this neighbourhood is actually slightly larger than $A$ (for $A$ not too large) seems necessary.
It is worth noting that proving such an inequality is also the crucial step in establishing convergence of the cluster expansion, since we again need to limit how many independent sets $A$ can contribute a weight of $w(A) = 2^{-\abs {N_{\ZZ_m^n}(A, I)}}$ to $\log Z_{\PM(\ZZ_m^n, I)}$.

In the following, we will prove an isoperimetric inequality that constitutes first progress towards an upper bound for $\II(\ZZ_m^n)$ with $m \geq 3$ odd, but unfortunately is not strong enough yet. Our proof adapts the approach of \cite[Lemma~6.1]{JK20} to the case of odd sidelength. Since in the construction of the abovementioned maximum independent set $I_0 = \iota_n(0, 1, \ldots, 1) \in \II^*(\ZZ_m^n)$, vertices $v \in V(\ZZ_m^n)$ are classified according to the value of $\sum_{i = 1}^n v_i$ mod $m$, it is helpful to refer to this number as the \emph{class} $\cls(v)$ of $v$. This partitions $V(\ZZ_m^n)$ into $\EE_m^n(p) \coloneqq \{ v \in V(\ZZ_m^n) \colon \cls(v) = p \}$ for $p \in \ZZ_m$. Note that with this notation, we have $I_0 = \EE_m^n(0) \cup \EE_m^n(2) \cup \ldots \cup \EE_m^n(m-3)$.

For the sake of simplicity, we write $N_n$ instead of $N_{\ZZ_m^n}$. For arbitrary subsets $A \subset V(\ZZ_m^n)$ and $q \in \ZZ_m$, we define $A_q \coloneqq \{ v \in V(\ZZ_m^{n-1}) \colon (v, q) \in A \}$ and hence obviously $\abs A = \sum_{q \in \ZZ_m} \abs {A_q}$. The following lemma is the central step in our induction.

\begin{lemma}\label{lem: slice-est}
For $m, n \in \NN$ with $m \geq 3$ odd, consider $p, q \in \ZZ_m$ and $A \subset \EE_m^n(p)$. Then
\[
	\abs {N_n(A, \EE_m^n(p \pm 1))_q} 
	\geq \max \left\lbrace \abs {N_{n-1}(A_q, \EE_m^{n-1}(p - q \pm 1))}, \abs {A_{q \mp 1}} \right\rbrace
	\,.
\]
\end{lemma}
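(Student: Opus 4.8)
\section*{Proof proposal}

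The plan is to prove the lemma by directly unpacking the neighbourhood operator and the slicing notation, so that everything reduces to the two ways in which a neighbour of a vertex $(v,q) \in V(\ZZ_m^n)$ can arise. By symmetry it suffices to treat the ``$+$'' case, i.e.\ to establish
\[
	\abs{N_n(A, \EE_m^n(p+1))_q} \geq \max\left\lbrace \abs{N_{n-1}(A_q, \EE_m^{n-1}(p-q+1))},\ \abs{A_{q-1}} \right\rbrace;
\]
the ``$-$'' case follows verbatim after replacing every $+1$ by $-1$. I would then prove the two inclusions $N_{n-1}(A_q, \EE_m^{n-1}(p-q+1)) \subset N_n(A, \EE_m^n(p+1))_q$ and $A_{q-1} \subset N_n(A, \EE_m^n(p+1))_q$ and read off the maximum.

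First I would record two elementary structural facts. The first is about the block structure $V(\ZZ_m^n) = V(\ZZ_m^{n-1}) \times \ZZ_m$, where the last coordinate is read in $\ZZ_m$: the neighbours of $(v,q)$ in $\ZZ_m^n$ are exactly the vertices $(v',q)$ with $v' \sim v$ in $\ZZ_m^{n-1}$ together with the two vertices $(v,q-1)$ and $(v,q+1)$. The second is arithmetic of the class function: since an edge of $\ZZ_m^n$ changes the value of $\cls$ by exactly $\pm 1 \not\equiv 0 \bmod m$, the set $A \subset \EE_m^n(p)$ is automatically independent, $N_n(A) \subset \EE_m^n(p-1) \cup \EE_m^n(p+1)$, and $A_q \subset \EE_m^{n-1}(p-q)$. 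In particular, whenever we exhibit a vertex $(v,q)$ with $\cls((v,q)) = p+1$ that is adjacent to a vertex of $A$, the condition ``$(v,q) \notin A$'' needed for $(v,q) \in N_n(A)$ holds for free, because $p+1 \not\equiv p \bmod m$.

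With this in hand the two inclusions are immediate. For the first, let $v \in N_{n-1}(A_q, \EE_m^{n-1}(p-q+1))$; then $v$ has a neighbour $v' \in A_q$, so $(v,q)$ is adjacent to $(v',q) \in A$, and $\cls((v,q)) = \cls(v) + q = (p-q+1)+q = p+1$, whence $(v,q) \in N_n(A, \EE_m^n(p+1))$ and $v \in N_n(A, \EE_m^n(p+1))_q$. For the second, let $v \in A_{q-1}$, i.e.\ $(v,q-1) \in A$; then $(v,q)$ is adjacent to $(v,q-1)$, and since $(v,q-1) \in \EE_m^n(p)$ we get $\cls(v) = p-(q-1)$ and hence $\cls((v,q)) = p+1$, so again $(v,q) \in N_n(A, \EE_m^n(p+1))_q$. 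Passing to cardinalities and taking the larger of the two lower bounds yields the claim.

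There is no genuine obstacle here; the whole content is bookkeeping with $\cls$ and the two-block decomposition of $V(\ZZ_m^n)$. The only two points that warrant a moment of care are: that the ``$\setminus X$'' in the definition of $N_n(\cdot)$ causes no trouble — this is exactly the class argument above, and is the one place the hypothesis $m \geq 3$ is used to guarantee $\pm 1 \not\equiv 0$; and keeping the sign conventions aligned, in particular noting that a shift in the last coordinate from $q-1$ to $q$ \emph{raises} the total class by $1$, which is why the neighbourhood in class $\EE_m^n(p+1)$ picks up the slice $A_{q-1}$ rather than $A_{q+1}$ (and symmetrically for the ``$-$'' case), matching the $\pm$ versus $\mp$ in the statement.
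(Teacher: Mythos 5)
Your proof is correct and follows essentially the same route as the paper's: establish the two inclusions $N_{n-1}(A_q, \EE_m^{n-1}(p-q+1)) \subset N_n(A, \EE_m^n(p+1))_q$ and $A_{q-1} \subset N_n(A, \EE_m^n(p+1))_q$ via the block decomposition of $V(\ZZ_m^n)$ and the arithmetic of $\cls$, then pass to cardinalities. You additionally make explicit that the ``$\setminus X$'' in the definition of $N_n(\cdot)$ causes no trouble because $\cls$ separates $A$ from the exhibited neighbours, a point the paper leaves implicit.
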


\begin{proof}
By symmetry, it suffices to prove the statement for $N_n(A, \EE_m^n(p + 1))_q$. We show that in fact, this set contains both $N_{n-1}(A_q, \EE_m^{n-1}(p - q + 1))$ and $A_{q - 1}$.

For the first part, let $v \in N_{n-1}(A_q, \EE_m^{n-1}(p - q + 1))$ be arbitrary. Then there is $u \in A_q$ such that $uv \in E(\ZZ_m^{n-1})$. This means that $(u, q)(v, q) \in E(\ZZ_m^n)$, so $(v, q) \in N_n(A)$. Furthermore, $\cls(v) = p - q + 1$ implies $\cls((v, q)) = \cls(v) + q = p + 1$, so $(v, q) \in N_n(A, \EE_m^n(p + 1))$ and $v \in N_n(A, \EE_m^n(p + 1))_q$ follows.

For the second part, let $v \in A_{q - 1}$ be arbitrary. Then $(v, q - 1) \in A$, so $(v, q) \in N_n(A)$. Furthermore, $\cls((v, q - 1)) = p$ because of $(v, q - 1) \in A \subset \EE_m^n(p)$ implies $\cls((v, q)) = \cls((v, q - 1)) + 1 = p + 1$, so $(v, q) \in N_n(A, \EE_m^n(p + 1))$ and $v \in N_n(A, \EE_m^n(p + 1))_q$ follows.
\end{proof}

We shall also use two further easy observations.

\begin{lemma}\label{lem: circular}
Let $m \in \NN$ and $\delta > 0$. Suppose $\alpha_0, \ldots, \alpha_{m-1} \in \RR$ such that $\alpha_{q - 1} \leq \alpha_q + \delta$ for all $q \in \ZZ_m$. Then their mean $\alpha \coloneqq \sum_{q \in \ZZ_m} \alpha_q / m$ satisfies $\alpha_q \in [\alpha - \frac{m-1}2 \delta, \alpha + \frac{m-1}2 \delta]$ for all $q \in \ZZ_m$. The same conclusion also holds if $\alpha_{q + 1} \leq \alpha_q + \delta$ for all $q \in \ZZ_m$.
\end{lemma}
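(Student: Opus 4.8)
The plan is to extract from the hypothesis the two elementary one-step inequalities and then iterate them around the cycle. Rewriting $\alpha_{q-1} \le \alpha_q + \delta$ as $\alpha_q \ge \alpha_{q-1} - \delta$, I see that going one step ``forward'' the values drop by at most $\delta$, while going one step ``backward'' they rise by at most $\delta$. Iterating $j$ times (all indices read mod $m$), this yields, for every fixed $r \in \ZZ_m$ and every $j \in \{ 0, 1, \ldots, m-1 \}$, the bounds $\alpha_{r-j} \le \alpha_r + j\delta$ and $\alpha_{r+j} \ge \alpha_r - j\delta$.

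Next I would average these over $j$. Since $\{ r - j : j = 0, \ldots, m-1 \}$ and $\{ r + j : j = 0, \ldots, m-1 \}$ each run over all of $\ZZ_m$, summing the first family of inequalities and dividing by $m$ gives $\alpha \le \alpha_r + \tfrac{\delta}{m} \sum_{j=0}^{m-1} j = \alpha_r + \tfrac{m-1}{2}\delta$, and likewise the second family gives $\alpha \ge \alpha_r - \tfrac{m-1}{2}\delta$. Rearranging both inequalities yields exactly $\alpha_r \in [\alpha - \tfrac{m-1}{2}\delta, \alpha + \tfrac{m-1}{2}\delta]$, and since $r$ was arbitrary this is the claim.

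For the second statement, where instead $\alpha_{q+1} \le \alpha_q + \delta$ holds for all $q$, I would simply relabel: set $\beta_q \coloneqq \alpha_{-q}$. Then $\beta_{q-1} = \alpha_{-q+1} \le \alpha_{-q} + \delta = \beta_q + \delta$, so $(\beta_q)_{q \in \ZZ_m}$ satisfies the hypothesis of the first part, and it has the same mean $\alpha$; applying the first part to $\beta$ transfers the conclusion back to $\alpha$.

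There is essentially no obstacle here beyond bookkeeping with the cyclic indices; the only point requiring a moment's care is that the telescoped inequalities $\alpha_{r-j} \le \alpha_r + j\delta$ and $\alpha_{r+j} \ge \alpha_r - j\delta$ are used only in the range $0 \le j \le m-1$, so that no residue is traversed twice, which is precisely what makes the averaging over a single complete residue system mod $m$ legitimate.
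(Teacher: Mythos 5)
Your proof is correct and matches the paper's argument essentially step for step: both iterate the one-step inequality to get $\alpha_{r-j} \leq \alpha_r + j\delta$ (and the mirror bound), sum over a complete residue system mod $m$ and divide by $m$, and handle the second statement by the relabelling $q \mapsto -q$. The only differences are notational.
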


\begin{proof}
It suffices to only prove the case $\alpha_{q - 1} \leq \alpha_q + \delta$ as for $\alpha_{q + 1} \leq \alpha_q + \delta$, we can consider the sequence defined by $\alpha'_q \coloneqq \alpha_{-q}$ instead. So let $q \in \ZZ_m$ be arbitrary and observe that inductively, $\alpha_{q-r} \leq \alpha_q + r\delta$ holds for all $r \geq 0$ (reading the indices of $\alpha_i$'s modulo $m$). Now calculate
\[
	\alpha
	= \sum_{\tilde q \in \ZZ_m} \frac {\alpha_{\tilde q}}m
	= \sum_{r = 0}^{m-1} \frac {\alpha_{q-r}}m
	\leq \sum_{r = 0}^{m-1} \frac {\alpha_q + r\delta}m
	= \alpha_q + \frac \delta m \sum_{r = 0}^{m-1} r
	= \alpha_q + \frac {m-1}2 \delta
	\,.
\]

Reordering the assumption also guarantees that $\alpha_{q + 1} \geq \alpha_q - \delta$ for all $q \in \ZZ_m$, which inductively yields $\alpha_{q+r} \geq \alpha_q - r\delta$ for all $r \geq 0$. This allows us to obtain the inverse estimate $\alpha = \sum_{r = 0}^{m-1} \frac {\alpha_{q+r}}m \geq \alpha_q - \frac \delta m \sum_{r = 0}^{m-1} r = \alpha_q - \frac {m-1}2 \delta$ as well. Taken together, we have shown that $\abs {\alpha - \alpha_q} \leq \frac {m-1}2 \delta$ for all $q \in \ZZ_m$, which is equivalent to $\alpha_q \in [\alpha - \frac{m-1}2 \delta, \alpha + \frac{m-1}2 \delta]$. Since $q \in \ZZ_m$ was chosen arbitrary, this finishes the proof.
\end{proof}

\begin{lemma} \label{lem: concavity}
Let $a < b$ and $g \colon [a, b] \to \RR$ be concave. Then among all multisets $X$ with $m \in \NN$ elements and $\sum_{x \in X} x = m \frac {a + b}2$, the minimum value of $\sum_{x \in X} g(x)$ is achieved by $\ell \coloneqq \lfloor m/2 \rfloor$ copies of $a$, $\ell$ copies of $b$, and at most one copy of $\frac {a + b}2$.
\end{lemma}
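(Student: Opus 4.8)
The plan is to reduce an arbitrary admissible multiset to the claimed extremal one by a sequence of ``spreading'' moves, each of which does not increase $\sum_{x \in X} g(x)$, and then to observe that the extremal configuration is the \emph{only} admissible multiset with at most one element in the open interval $(a,b)$. The basic ingredient is the following elementary consequence of concavity: if $a \le x' \le x \le y \le y' \le b$ and $x' + y' = x + y$, then
\[
  g(x') + g(y') \le g(x) + g(y)\,,
\]
i.e.\ pulling two values apart while keeping their sum fixed cannot increase the sum of the values of $g$. I would prove this directly, writing $x = \lambda x' + (1-\lambda) y'$ and (hence) $y = (1-\lambda) x' + \lambda y'$ for a suitable $\lambda \in [0,1]$ and adding the two concavity inequalities $g(x) \ge \lambda g(x') + (1-\lambda) g(y')$ and $g(y) \ge (1-\lambda) g(x') + \lambda g(y')$; in particular no differentiability of $g$ is needed.

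First I would set up the exchange process. Starting from any multiset $X$ with $m$ elements and $\sum_{x\in X} x = m\frac{a+b}{2}$, as long as there are two elements $x \le y$ of $X$ lying in the open interval $(a,b)$, replace them by $x - t$ and $y + t$, where $t \coloneqq \min\{\,x-a,\ b-y\,\} > 0$. This preserves the number of elements and their total sum, keeps all elements inside $[a,b]$, does not increase $\sum_{x\in X} g(x)$ by the spreading inequality, and moves at least one of $x, y$ onto an endpoint of $[a,b]$, so it strictly decreases the number of elements lying in $(a,b)$. Since that number is a nonnegative integer, after finitely many steps we arrive at an admissible multiset $X^\ast$ with at most one element in $(a,b)$ and $\sum_{x\in X^\ast} g(x) \le \sum_{x\in X} g(x)$.

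It then remains to identify all admissible multisets with at most one interior element. Writing such a multiset as $j$ copies of $a$, $k$ copies of $b$, and $\epsilon \in \{0,1\}$ copies of a value $c \in (a,b)$, the cardinality and sum constraints read $j + k + \epsilon = m$ and $ja + kb + \epsilon c = \frac m2 (a+b)$. Solving these shows that $\frac m2 - j$ must equal $0$ if $\epsilon = 0$ and must lie strictly in $(0,1)$ if $\epsilon = 1$ (indeed $c = b - (\frac m2 - j)(b-a)$ in the latter case). Since $\frac m2 - j$ is an integer for $m$ even and a half-integer for $m$ odd, the only possibilities are $m$ even with $j = k = \ell$, $\epsilon = 0$, and $m$ odd with $j = k = \ell$, $\epsilon = 1$, $c = \frac{a+b}{2}$ --- exactly the configuration in the statement. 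Hence $X^\ast$ equals that configuration, and since it is itself admissible, it attains the minimum of $\sum_{x\in X} g(x)$.

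I do not expect a genuine conceptual obstacle here; the spreading inequality is classical and the exchange argument is standard. The two points that require a little care are: (i) phrasing the exchange step so that termination is manifest --- it is, because each step decreases the nonnegative integer ``number of elements in $(a,b)$'' --- and (ii) the mildly tedious parity bookkeeping in the last paragraph, together with the observation that for $m$ odd there is no admissible multiset with all elements at the endpoints, so the process there necessarily ends with exactly one interior element equal to $\frac{a+b}{2}$.
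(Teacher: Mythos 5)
Your proof is correct and takes essentially the same approach as the paper's: a ``spreading'' exchange step with $t = \min\{x-a, b-y\}$ driven by concavity, termination because each step reduces the number of interior elements, and then identifying the unique admissible terminal multiset via the parity of $m$. You spell out the concavity step and the final bookkeeping in more detail than the paper, but the argument is the same.
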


\begin{proof}
Concavity implies that whenever $X$ contains two values $x, y$ with $a < x \leq y < b$, letting $\epsilon \coloneqq \min \{ x-a, b-y \}$ and replacing $x, y$ by $x - \epsilon$ and $y + \epsilon$ does not increase $\sum_{x \in X} g(x)$, while leaving $\sum_{x \in X} x$ unchanged. Inductively, we arrive at a minimizer $X$ that contains at most one value that is neither $a$ nor $b$. Straightforward calculation then shows that it must be the one claimed in the statement.
\end{proof}

We are now ready to prove the isoperimetric inequality. It verifies that when we consider a set $A$ in one partition class $\EE_m^n(p)$ and its neighbours in one of the adjacent partition classes, then there are at least $\frac {1 - \alpha}{C_m \sqrt n} \abs A$ additional neighbours apart from the trivial $\abs A$ many, where $\alpha \coloneqq \abs A / m^{n-1}$ is the relative size of $A \subset \EE_m^n(p)$ and the constant $C_m \coloneqq \sqrt{\ell^3m}$ with $\ell \coloneqq \lfloor m/2 \rfloor$ does not depend on $A$ or $n$.

\begin{proof}[Proof of~\cref{thm: isoperimetry}]
By symmetry, it suffices to prove the statement for $N_n(A, \EE_m^n(p + 1))$. We proceed by induction on $n$. For $n = 1$, every class consists of a single vertex, so either $\abs A = \abs {N_1(A, \EE_m^n(p + 1))} = 0$ or $\abs A = 1$, $\abs{N_1(A, \EE_m^n(p + 1))} = 1$, and $\alpha = 1$. In both cases, the inequality holds. So let $n > 1$ and define $\delta \coloneqq m\alpha(1 - \alpha) / \sqrt{\ell^3mn}$. It is easy to see that $A_q \subset \EE_m^{n-1}(p - q)$ for every $q \in \ZZ_m$ because of $A \subset \EE_m^n(p)$. In order to apply the induction hypothesis to $A_0, \ldots, A_{m-1}$, let $\alpha_q \coloneqq \abs {A_q} / m^{n-2}$ and distinguish two cases:

\noindent\emph{Case 1:} There is $\tilde q \in \ZZ_m$ such that $\alpha_{\tilde q - 1} \geq \alpha_{\tilde q} + \delta$. 

We use \cref{lem: slice-est} and the induction hypothesis to obtain the estimate
\[
	\abs {N_n(A, \EE_m^n(p + 1))_q} 
	\geq \abs{N_{n-1}(A_q, \EE_m^{n-1}(p - q + 1))} 
	\geq \abs {A_q}
\]
for all $q \in \ZZ_m \setminus \{ \tilde q \}$. For $\tilde q$ itself, we calculate
\[
	\abs {N_n(A, \EE_m^n(p + 1))_{\tilde q}}
	\geq \abs {A_{\tilde q - 1}}
	= \alpha_{\tilde q - 1} m^{n-2}
	\geq (\alpha_{\tilde q} + \delta)m^{n-2}
	= \abs {A_{\tilde q}} + \delta m^{n-2}
	\,.
\]
Summing up all these inequalities and plugging in the definition of $\delta$, we obtain as desired
\begin{align*}
	\abs {N_n(A, \EE_m^n(p + 1))}
	= \sum_{q \in \ZZ_m} \abs {N_n(A, \EE_m^n(p + 1))_q}
	&\geq \sum_{q \in \ZZ_m} \abs {A_q} + \frac {m\alpha (1 - \alpha)}{\sqrt{\ell^3mn}} m^{n-2} \\
	&= \abs A \left( 1 + \frac {1-\alpha}{\sqrt{\ell^3mn}} \right)
	\,.
\end{align*}

\noindent\emph{Case 2:} We have $\alpha_{q - 1} < \alpha_q + \delta$ for all $q \in \ZZ_m$. 

Here, \cref{lem: circular} implies that all $\alpha_q$ are within at most $\ell \delta$ of their mean, which is precisely $\sum_{q \in \ZZ_m} \alpha_q / m = \sum_{q \in \ZZ_m} \abs {A_q} / m^{n-1} = \abs A / m^{n-1} = \alpha$. So, $\alpha_q \in [\alpha - \ell \delta, \alpha + \ell \delta]$ holds for all $q \in \ZZ_m$. According to \cref{lem: slice-est} and the induction hypothesis, we can now bound
\begin{align}\label{eq: case-2}
	\abs {N_n(A, \EE_m^n(p + 1))}
	= \sum_{q \in \ZZ_m} \abs {N_n(A, \EE_m^n(p + 1))_q}
	&\geq \sum_{q \in \ZZ_m} \abs {N_{n-1}(A_q, \EE_m^{n-1}(p - q + 1))} \notag\\
	&\geq \sum_{q \in \ZZ_m} \abs {A_q} \left( 1 + \frac {1 - \alpha_q}{\sqrt{\ell^3m(n-1)}} \right) \notag\\
	&= \abs A + \frac {m^{n-2}}{\sqrt{\ell^3m(n-1)}} \sum_{q \in \ZZ_m} \alpha_q(1 - \alpha_q)
	\,.
\end{align}

Defining the function $g(\alpha) \coloneqq \alpha(1 - \alpha)$ on the interval $[\alpha - \ell \delta, \alpha + \ell \delta]$, we have to minimize the sum $\sum_{q \in \ZZ_m} g(\alpha_q)$ subject to the condition $\sum_{q \in \ZZ_m} \alpha_q = m \alpha$. As $g$ is concave, \cref{lem: concavity} guarantees that choosing $\alpha_0, \ldots, \alpha_{m-1}$ as $\ell$ copies of $\alpha - \ell \delta$, $\ell$ copies of $\alpha + \ell \delta$, and one copy of $\alpha$ itself yields the minimal value, which we calculate as
\[
	\sum_{q \in \ZZ_m} g(\alpha_q)
	\geq \ell g(\alpha - \ell \delta) + g(\alpha) + \ell g(\alpha + \ell \delta)
	= mg(\alpha) - 2\ell^3\delta^2
	\,.
\]
In order to determine the relative significance of the error term $-2\ell^3\delta^2$, we continue as follows:
\[
	\frac {mg(\alpha) - 2\ell^3\delta^2}{mg(\alpha)}
	= 1 - \frac {2\ell^3m^2\alpha^2(1 - \alpha)^2}{mg(\alpha)2\ell^3mn}
	= 1 - \frac {2g(\alpha)}{n}
	\geq 1 - \frac 1{2n}
	\geq \sqrt {1 - \frac 1n}
	= \sqrt { \frac {n-1}n }
	\,.
\]
Here, the two inequalities follow from the fact that the maximum of $g$ on $[0, 1] \ni \alpha$ is $g(1/2) = 1/4$ as well as the fact that $(1 - 1/(2n))^2 = 1 - 1/n + 1/(4n^2) \geq 1 - 1/n$. Finally, we plug this into inequality (\ref{eq: case-2}) to obtain as desired
\begin{align*}
	\abs {N_n(A, \EE_m^n(p + 1))}
	&\overset {\mathclap{(\ref{eq: case-2})}} \geq \abs A + \frac {m^{n-2}}{\sqrt{\ell^3m(n-1)}} \sum_{q \in \ZZ_m} g(\alpha_q) \\
	&\geq \abs A + \frac {m^{n-2} \cdot mg(\alpha)}{\sqrt{\ell^3m(n-1)}} \sqrt { \frac {n-1}n } \\
	&= \abs A \left( 1 + \frac {1 - \alpha}{\sqrt{\ell^3mn}} \right)
	\,.
\end{align*}
This concludes the proof.
\end{proof}

Obviously, this also establishes a lower bound on the total number of neighbours of $A$, irrespective of their class, as applying \cref{thm: isoperimetry} to both $p+1$ and $p-1$ yields double the bound. More generally, we can choose $A$ as an arbitrary subset of pairwise non-adjacent partition classes, which are automatically independent. This leads to the following corollary.

\begin{corollary}\label{cor: isoperimetry-more-classes}
Let $P \in \II(\ZZ_m^1)$ be an independent set in the $m$-cycle and $A \subset \bigcup_{p \in P} \EE_m^n(p)$. Setting $\alpha_p \coloneqq \abs {A \cap \EE_m^n(p)} / m^{n-1}$, we have 
\[
	\abs{N_{\ZZ_m^n}(A)} 
	\geq \abs A + \max_{p \in P} \abs {A \cap \EE_m^n(p)} + \frac {m^{n-1}}{\sqrt{\ell^3mn}} \sum_{p \in P} \alpha_p (1 - \alpha_p)
	\,.
\]
\end{corollary}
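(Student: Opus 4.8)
The plan is to split $N_{\ZZ_m^n}(A)$ according to the partition classes $\EE_m^n(r)$, estimate each piece with \cref{thm: isoperimetry}, and then recombine the pieces through a purely combinatorial inequality about the cycle $\ZZ_m^1$.

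First I would record the easy structural facts. Since $P$ is independent in $\ZZ_m^1$, any two distinct elements of $P$ differ by at least $2$ mod $m$, so the classes $\EE_m^n(p)$ with $p \in P$ are pairwise non-adjacent in $\ZZ_m^n$; in particular $A = \bigsqcup_{p \in P}(A \cap \EE_m^n(p))$ is independent and no neighbour of $A$ lies in a class indexed by $P$, i.e.\ $N_{\ZZ_m^n}(A) \subset \bigsqcup_{r \in \ZZ_m \setminus P}\EE_m^n(r)$. For $p \in P$ write $f(p) \coloneqq \abs{A \cap \EE_m^n(p)}\bigl(1 + (1 - \alpha_p)/\sqrt{\ell^3 m n}\bigr)$, which is non-negative since $\alpha_p \in [0, 1]$. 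For $r \in \ZZ_m \setminus P$ and $p \in P$ with $p \in \{r - 1, r + 1\}$, the class $\EE_m^n(r)$ contains $N_{\ZZ_m^n}(A \cap \EE_m^n(p), \EE_m^n(r))$, so \cref{thm: isoperimetry} yields $\abs{N_{\ZZ_m^n}(A) \cap \EE_m^n(r)} \geq f(p)$; and if both $r - 1$ and $r + 1$ lie in $P$ then $\abs{N_{\ZZ_m^n}(A) \cap \EE_m^n(r)} \geq \max\{f(r - 1), f(r + 1)\}$.

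Next I would organise the classes outside $P$ into runs. List $P$ in cyclic order as $p_1, \ldots, p_k$ and let the run of classes strictly between $p_i$ and $p_{i + 1}$ (indices mod $k$) contain $h_i \geq 1$ classes. A run with $h_i = 1$ consists of the single class $\EE_m^n(p_i + 1) = \EE_m^n(p_{i + 1} - 1)$, which by the above contributes at least $\max(f(p_i), f(p_{i + 1}))$ to $\abs{N_{\ZZ_m^n}(A)}$; a run with $h_i \geq 2$ contains the two distinct classes $\EE_m^n(p_i + 1)$ and $\EE_m^n(p_{i + 1} - 1)$, contributing at least $f(p_i)$ and $f(p_{i + 1})$ respectively (interior classes of the run contribute nothing). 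As all these classes are pairwise disjoint, summing over runs gives
\[
	\abs{N_{\ZZ_m^n}(A)} \geq \sum_{i\,:\,h_i = 1}\max\bigl(f(p_i), f(p_{i + 1})\bigr) + \sum_{i\,:\,h_i \geq 2}\bigl(f(p_i) + f(p_{i + 1})\bigr)
	\,.
\]

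It remains to show that the right-hand side is at least $\sum_{p \in P}f(p) + \max_{p \in P}f(p)$; together with $\max_{p}f(p) \geq \max_{p}\abs{A \cap \EE_m^n(p)}$ and $\sum_{p}f(p) = \abs A + \tfrac{m^{n - 1}}{\sqrt{\ell^3 m n}}\sum_{p \in P}\alpha_p(1 - \alpha_p)$ this gives the corollary (the case $P = \emptyset$ being trivial). Using $\max(a, b) = \tfrac{a + b}2 + \tfrac{\abs{a - b}}2$ and splitting each $h_i \geq 2$ summand as $\max + \min$, the right-hand side becomes $\sum_{p \in P}f(p) + \tfrac12\sum_i\abs{f(p_i) - f(p_{i + 1})} + \sum_{i\,:\,h_i \geq 2}\min(f(p_i), f(p_{i + 1}))$. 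Traversing the cycle $p_1 p_2 \cdots p_k p_1$ along its two arcs between an index maximising $f$ and one minimising $f$ and telescoping along each arc gives $\sum_i\abs{f(p_i) - f(p_{i + 1})} \geq 2\bigl(\max_p f(p) - \min_p f(p)\bigr)$. Since $m$ is odd we cannot have $h_i = 1$ for every $i$ (that would force $m = 2k$), so fixing one $i_0$ with $h_{i_0} \geq 2$ and using $f \geq 0$ gives $\sum_{i\,:\,h_i \geq 2}\min(f(p_i), f(p_{i + 1})) \geq \min_p f(p)$; adding the last two bounds yields exactly $\sum_p f(p) + \max_p f(p)$. The main obstacle is this final combinatorial step — in particular, seeing that the extra $+\max$ term is genuinely gained — and it hinges precisely on the existence of a run of length at least two, which is the one place where oddness of $m$ is used.
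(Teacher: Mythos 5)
Your proof is correct, but it takes a genuinely different route from the paper's. The paper argues more compactly: it sets $Q \coloneqq N_{\ZZ_m^1}(P)$, notes $\abs Q \geq \abs P + 1$, and then for each $q \in Q$ chooses a single representative $p_q \in N_{\ZZ_m^1}(q, P)$ in such a way that every $p \in P$ is chosen and the $p$ with the largest $\alpha_p$ is chosen twice; applying \cref{thm: isoperimetry} to the pair $(p_q, q)$ and summing the resulting disjoint contributions over $q \in Q$ then gives the bound directly, with the extra $\max$ term coming from the doubly chosen $p$. You instead decompose $\ZZ_m \setminus P$ into the maximal runs between cyclically consecutive elements of $P$, bound the contribution of each run (distinguishing length $1$ from length $\geq 2$), and finish with the purely combinatorial inequality
\[
\sum_{i\,:\,h_i = 1}\max\bigl(f(p_i), f(p_{i+1})\bigr) + \sum_{i\,:\,h_i \geq 2}\bigl(f(p_i) + f(p_{i+1})\bigr) \geq \sum_{p \in P} f(p) + \max_{p \in P} f(p)\,,
\]
proved via $\max(a,b) = \tfrac{a+b}{2} + \tfrac{\abs{a-b}}{2}$, the cyclic telescoping bound $\sum_i \abs{f(p_i) - f(p_{i+1})} \geq 2(\max_p f(p) - \min_p f(p))$, and the observation that $m$ odd forces at least one run of length $\geq 2$. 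I checked the algebra and it is sound, including the degenerate cases $\abs P \in \{0,1\}$. Your version is somewhat longer, but it makes fully explicit both where the ``$+\max$'' bonus comes from and where oddness of $m$ enters, whereas the paper's statement that a suitable choice of the $p_q$ ``can obviously be done'' silently relies on the same run-of-length-$\geq 2$ phenomenon; the paper's version, in exchange, avoids the $\max$/$\min$ bookkeeping entirely.
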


\begin{proof}
It is easy to see that $\abs {N_{\ZZ_m^1}(P)} \geq \abs P + 1$. Now let $Q \coloneqq N_{\ZZ_m^1}(P)$ and for each $q \in Q$ choose $p_q \in N_{\ZZ_m^1}(q, P)$. This can obviously be done such that $\{ p_q \colon q \in Q \} = P$, choosing the $p \in P$ with maximal $\alpha_p$ twice. We then apply \cref{thm: isoperimetry} to find that
\begin{align*}
	\abs{N_{\ZZ_m^n}(A \cap \EE_m^n(p_q), \EE_m^n(q))} 
	&\geq \abs {A \cap \EE_m^n(p_q)} \left( 1 + \frac {1-\alpha_{p_q}}{\sqrt{\ell^3mn}} \right) \\
	&= \abs {A \cap \EE_m^n(p_q)} + \frac {m^{n-1}}{\sqrt{\ell^3mn}} \alpha_{p_q}(1-\alpha_{p_q})
\end{align*}
for every $q \in Q$. As these sets are disjoint, adding up the inequalities yields
\begin{align} \label{eq: two-sums}
	\abs{N_{\ZZ_m^n}(A)}
	&\geq \sum_{q \in Q} \abs{N_{\ZZ_m^n}(A \cap \EE_m^n(p_q), \EE_m^n(q))} \notag\\
	&\geq \sum_{q \in Q} \abs {A \cap \EE_m^n(p_q)} + \frac {m^{n-1}}{\sqrt{\ell^3mn}} \sum_{q \in Q} \alpha_{p_q}(1-\alpha_{p_q})
	\,.
\end{align}
We recall that the $p_q$ were chosen in a way that guarantees that every $p \in P$ is chosen at least once. This means that $\sum_{q \in Q} \alpha_{p_q}(1 - \alpha_{p_q}) \geq \sum_{p \in P} \alpha_p(1 - \alpha_p)$. Moreover, the $p \in P$ with largest $\alpha_p$ is chosen twice. This means that $\sum_{q \in Q} \abs {A \cap \EE_m^n(p_q)}$ contains $\max_{p \in P} \abs {A \cap \EE_m^n(p)}$ twice, and so $\sum_{q \in Q} \abs {A \cap \EE_m^n(p_q)} \geq \abs A + \max_{p \in P} \abs {A \cap \EE_m^n(p)}$. Plugging both observations into (\ref{eq: two-sums}) yields the desired statement.
\end{proof}

In order to see how this is different from the isoperimetric inequality needed to deduce an upper bound on $\abs {\II(\ZZ_m^n)}$, consider the arguably easiest case $m = 3$. 
While \cref{thm: isoperimetry} examines sets $A$ in, say, $\EE_3^n(1)$, we would actually need to consider independent sets $A \subset \overline {I_0} = \EE_3^n(1) \cup \EE_3^n(2)$ and maintain an isoperimetric inequality of the form
\[
	\abs {N_{\ZZ_3^n}(A, \EE_3^n(0))}
	\geq \abs A \big( 1 + f(n, \abs A / \abs {\overline {I_0}}) \big)
	\,.
\]

Using the independence of $A \subset \overline {I_0}$, one easily observes that $\abs A / \abs {\overline {I_0}} \leq 1/2$ and might thus be tempted to hope that $f(n, \abs A / \abs {\overline {I_0}})$ depends on $\abs A / \abs {\overline {I_0}}$ only by involving a factor of $1 - 2 \abs A / \abs {\overline {I_0}}$. 
There is, however, the following counterexample: 
Let $A \coloneqq \overline {I_0} \cap I_1$ be the intersection of $\overline {I_0}$ with a different maximum independent set, for example $I_1 \coloneqq \iota_n(0, -1, 1, \ldots, 1)$. 
Then $\abs A / \abs {\overline {I_0}} = 1/3$ by the argument of \cref{lem: intersection maxindepsets}, but the neighbourhood of $A$ in $I_0$ does not include $I_0 \cap I_1$, so $\abs {N_{\ZZ_3^n}(A, I_0)} = 2 \cdot 3^{n-2} = \abs A$.
Yet, we still believe there is a constant $C$ such that every independent set $A \subset \overline {I_0}$ satisfies
\[
	\abs {N_{\ZZ_3^n}(A, \EE_3^n(0))}
	\geq \abs A \left( 1 + \frac {1 - 3 \abs A / \abs {\overline {I_0}}}{C \sqrt n} \right)
	\,.
\]
Unfortunately, we did not succeed in proving such a statement with the approach outlined in \cref{thm: isoperimetry}.

\section{Concluding remarks}
In this paper, we make progress on a question of Jenssen and Keevash~\cite{JK20} about the number of independent sets in Cartesian powers of the triangle.
We elaborate on several properties which illustrate that estimating this number may be much harder than the bipartite cases (including the hypercube) that have been considered so far;
one reason being the more complex isoperimetric inequality that is needed.

We establish in Theorem~\ref{thm: lowerboundK3} a lower bound on the number of independent sets in~$\ZZ_3^n$, which we conjecture to be asymptotically tight.
Clearly, it would be desirable to prove that this bound is indeed tight, but even finding an isoperimetric inequality as described after Theorem~\ref{thm: isoperimetry} would be interesting.

For Cartesian powers of larger odd cycles, we provide a less precise lower bound on the number of independent sets.
Moreover, we show how to approach this question with the cluster expansion method by calculating initial terms for $\ZZ_5^n$ and $\ZZ_7^n$.
More precise asymptotics and further progress towards an upper bound would again be highly desirable.

\section*{Acknowledgements}
We would like to thank Matthew Jenssen for many valuable ideas and discussions as well as for introducing us to the cluster expansion method.

\bibliographystyle{amsplain}
\bibliography{indepsets.bib}

\end{document}